\documentclass[11pt]{amsart}
\usepackage{amsfonts,amsmath,amsthm,amssymb, color,amscd}
\usepackage{fullpage}
\usepackage{todonotes}

\numberwithin{equation}{section}

\newtheorem{thm}{Theorem}[section]
\newtheorem*{thm*}{Theorem}
\newtheorem{prop}[thm]{Proposition}
\newtheorem*{prop*}{Proposition}

\newtheorem{cor}[thm]{Corollary}

\newtheorem{defin}[thm]{Definition}

\newtheorem{lemma}[thm]{Lemma}

\newtheorem{remark}[thm]{Remark}
\newtheorem*{remark*}{Remark}
\newtheorem*{remarks*}{Remarks}

\newcommand{\ip}[1]{\langle #1 \rangle}

\newcommand{\restrictto}[2]{\left. #1 \right|_{#2}}
\newcommand{\ddtat}{\restrictto{\frac{d}{dt}}{t=0}}


\newcommand{\tr}{\operatorname{tr}}
\newcommand{\Ric}{\operatorname{Ric}}
\newcommand{\ric}{\operatorname{ric}}

\newcommand{\Ker}{\operatorname{Ker}}
\newcommand{\Aut}{\operatorname{Aut}}
\newcommand{\Der}{\operatorname{Der}}
\newcommand{\Hom}{\operatorname{Hom}}

\begin{document}

\title{A Step Towards the Alekseevskii Conjecture}
\author{Michael Jablonski and Peter Petersen}

\maketitle

\begin{abstract}  We refine existing structure results  for non-compact, homogeneous, Einstein manifolds and provide a reduction in the classification problem of  such spaces.  Using this work, we verify the (Generalized) Alekseevskii Conjecture for a large class of homogeneous spaces.
\end{abstract}

A longstanding open question in the study of Riemannian homogeneous spaces is the classification of non-compact, Einstein spaces.  In the 1970s, it was conjectured by D.~Alekseevskii that any (non-compact) homogeneous Einstein space of negative scalar curvature is diffeomorphic to $\mathbb R^n$.  Equivalently, this conjecture can be phrased as follows:

\begin{quote}
\textbf{Classical Alekseevskii Conjecture:}  Given a  homogeneous Einstein space $G/K$ with negative scalar curvature, $K$ must be a maximal compact subgroup of $G$. 
\end{quote}
Part of the motivation for this conjecture comes from the conclusion holding true in the special cases of simply-connected Ricci flat homogeneous spaces \cite{AlekseevskiiKimelfeld:StructureOfHomogRiemSpacesWithZeroRicciCurv}, non-compact symmetric spaces, and, more generally, homogeneous (Einstein) spaces of negative sectional curvature \cite{Alekseevski:RiemSpacesOfNegCurv,AzencottWilson:HomogMfldsWithNegCurv}.   It is notable that in all these cases  the manifolds are so-called solvmanifolds, i.e.~they admit a transitive solvable group of isometries.

Since the posing of this conjecture, an enormous effort has been put into the classification of non-compact, homogeneous Einstein spaces.  Among simply-connected solvable Lie groups with left-invariant metrics, much is known about existence and uniqueness of Einstein metrics, see   \cite{Heber,LauretStandard} and references therein.  Further, it follows from \cite{AlekseevskyCortes:IsomGrpsOfHomogQuatKahlerMnflds} that in the case of Einstein solvmanifolds with negative scalar curvature, one can reduce to the simply-connected case; see \cite{Jablo:StronglySolvable} for more in this direction.  

In contrast to the progress made on solvmanifolds, there is work that suggests the conjecture might not be true.  In \cite{Dotti-Leite:MetricsOfNegativeRicciCurbatureOnSLnR} it is shown that $SL(n,\mathbb R)$ admits metrics of negative Ricci curvature for $n\geq 3$; although, in those examples the eigenvalues of $\Ric$ appear to be so widely spread that Einstein metrics might not exist.  The case of transitive, unimodular groups of isometries was further investigated in \cite{DottiMiatello:TransitveGroupActionsAndRicciCurvatureProperties} where it was shown that for such a space with negative Ricci curvature there must exist a transitive semi-simple group of isometries.  For more in this direction, see \cite{Nikonorov:OnTheRicciCurvatureOfHomogeneousMetricsOnNoncompactHomogeneousSpaces} and   Proposition \ref{prop: G semi-simple no compact factors} below.

The conjecture is known to hold in dimensions 4 and 5 and, in fact, all such spaces in these low dimensions are solvmanifolds, see \cite{Jensen:HomogEinsteinSpacesofDim4,Nikonorov:NoncompactHomogEinstein5manifolds}.  In dimension 6, very recently it was shown that the conjecture holds in the presence of a transitive, non-unimodular group of isometries, see \cite{Arroyo-Lafuente:HomogeneousRicciSolitonsInLowDimensions} or Section \ref{sec: GAC in low dim} of this work.  Again, all such 6-dimensional spaces turn out to be solvmanifolds.

In the general setting, very little was known about the structure of homogeneous Einstein metrics until the recent work \cite{LauretLafuente:StructureOfHomogeneousRicciSolitonsAndTheAlekseevskiiConjecture}.  The work presented here builds on the structure results obtained there.

\subsection*{Refining the conjecture.}  We refine the question of which $G$ and $K$ are possible when $G/K$ admits a $G$-invariant Einstein metric of negative scalar curvature.    More precisely, assume $G$ is simply-connected and consider a Levi decomposition 
	$$G=G_1\ltimes G_2$$
where $G_1$ is semi-simple and $G_2$ is the (solvable) radical.  Further, decompose $G_1$ into compact and non-compact factors, i.e.
	$$G_1 = G_c G_{nc}$$
where $G_c$ is the product of compact, simple subgroups and $G_{nc}$ the product of the non-compact, simple subgroups.  The intersection $G_c \cap G_{nc}$ is trivial since we assumed $G$ to be simply-connected.  Let $K_{nc}$ be a choice of connected subgroup whose Lie algebra is a maximal compact subalgebra of $\mathfrak g_{nc}$ (equivalently, $Ad(K_{nc})$ is a maximal compact subgroup of $Ad(G_{nc})$).  The subgroup $K_{nc}$ is closed, although not necessarily compact. 

	\begin{quote} \textbf{Strong Alekseevskii Conjecture:} Let $G/K$ be endowed with a $G$-invariant Einstein metric with negative scalar curvature.  Then, up to conjugation of $K$ in $G$,  $G_c < K$ and $K_{nc} < K$.
	\end{quote}

Observe that this is a stronger conjecture than  the Alekseevskii Conjecture as the above implies that such an  Einstein manifold has a transitive solvable group of isometries.  We note that all known examples of homogeneous Einstein spaces with negative scalar curvature are isometric to solvable Lie groups with left-invariant metrics and it is believed by some that such spaces exhaust the class of non-compact, homogeneous, Einstein spaces.  Further, the above conjecture holds for any transitive group of isometries of an Einstein solvmanifold, see \cite{Jablo:StronglySolvable}.   We take a step towards resolving the above conjecture.

\begin{thm}\label{thm: main result Gc < K}  Let $G/K$ be a  homogeneous space endowed with a $G$-invariant Einstein metric of negative scalar curvature, then  $G_c <K$ (up to conjugation).
\end{thm}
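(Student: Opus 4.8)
The plan is to show that every fundamental vector field generated by $\mathfrak g_c$ vanishes identically on $M=G/K$. Since the isotropy subalgebra at a point is exactly the set of $\xi\in\mathfrak g$ whose fundamental field vanishes there, this forces $\mathfrak g_c\subseteq\mathfrak k$ and hence $G_c<K$ after conjugating so that $\mathfrak k$ is the relevant isotropy subalgebra. For $\xi\in\mathfrak g_c$ write $X=\xi^*$ for the associated Killing field. The starting point is the Bochner identity for a Killing field on an Einstein manifold with $\Ric=\lambda g$, namely
\[
\tfrac12\Delta |X|^2 = |\nabla X|^2 - \lambda |X|^2 = |\nabla X|^2 + |\lambda|\,|X|^2 ,
\]
so that, because $\lambda<0$, the function $|X|^2$ is subharmonic, strictly so wherever $X\neq 0$, and satisfies $\Delta|X|^2\ge 2|\lambda|\,|X|^2$.

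Next I would feed this into a maximum principle at infinity. Since $(M,g)$ is Einstein, its Ricci curvature is a constant multiple of the metric and in particular bounded below, and $M$ is complete and homogeneous; the weak maximum principle of Omori--Yau type is therefore available. I would apply it to $F=\sum_i|\xi_i^*|^2$, where $\{\xi_i\}$ is an $\mathrm{Ad}(G_c)$-orthonormal basis of $\mathfrak g_c$, a choice that has the virtue of making $F$ a $G_c$-invariant function. If $F$ were bounded above on $M$, the principle would furnish points $p_k$ with $F(p_k)\to\sup F$ and $\limsup_k\Delta F(p_k)\le 0$; combined with $\Delta F\ge 2|\lambda|F$ this would force $\sup F=0$, hence $F\equiv 0$ and every $\xi^*\equiv 0$, which is precisely the desired conclusion.

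The crux, and the place where the structure theory must enter, is the boundedness of $F$, which is where I expect the main difficulty to lie. Although $G_c$ is compact, the fundamental fields of $\mathfrak g_c$ need not have bounded length on $M$, because the adjoint action of $G$ can carry $\mathfrak g_c$ into the radical. Indeed, $\mathfrak g_c$ is an ideal of the Levi factor and commutes with $\mathfrak g_{nc}$, so $\mathrm{Ad}(G_1)$ preserves $\mathfrak g_c$ and acts there by a relatively compact group; the genuine obstruction is the term $[\mathfrak g_2,\mathfrak g_c]\subseteq\mathfrak g_2$, which can make $\|\mathrm{Ad}(g^{-1})\xi\|$ grow as $g$ ranges over $G$ and thus render $F$ unbounded. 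To control this I would invoke the structure results for homogeneous Einstein spaces and Ricci solitons of \cite{LauretLafuente:StructureOfHomogeneousRicciSolitonsAndTheAlekseevskiiConjecture}, using them to reduce to a presentation in which the compact semisimple factor centralizes the radical, or, short of a full reduction, to bound $F$ along the noncompact directions by playing the positivity $\Delta F\ge 2|\lambda|F$ against the controlled growth of the fundamental fields.

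As a fallback to the global argument I would keep in reserve a purely pointwise computation: expanding the homogeneous Ricci formula in the direction $\xi^*$ for $\xi\in\mathfrak g_c$, the negative-definiteness of the Killing form on the compact semisimple ideal contributes with a favorable sign, which suggests $\Ric(\xi^*,\xi^*)\ge 0$ at a suitably chosen basepoint; against $\Ric=\lambda g$ with $\lambda<0$ this again forces $\xi^*=0$. The same cross-terms $[\mathfrak g_2,\mathfrak g_c]$ are what obstruct signing this expression, so either route ultimately rests on the same structural input: control over how the compact factor acts on the radical.
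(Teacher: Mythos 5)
Your skeleton---Bochner's identity for Killing fields plus a maximum principle---is the same as the paper's, and you have correctly located the crux in controlling how $\mathfrak g_c$ acts on the radical. But at exactly that point the proposal has a genuine gap rather than a proof. ``Invoke the structure results of \cite{LauretLafuente:StructureOfHomogeneousRicciSolitonsAndTheAlekseevskiiConjecture} to reduce to a presentation in which the compact factor centralizes the radical'' is not something that reference provides, and supplying the missing control is the actual mathematical content of the theorem. What the paper proves is weaker but sufficient: $\theta(\mathfrak g_c)$ acts on the nilradical $\mathfrak n$ by \emph{skew-symmetric} derivations relative to the nilsoliton metric (Lemma \ref{lemma:  Gc acts skew-symm on nilrad}). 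This is extracted from the compatibility condition $\sum_i[\theta(Y_i),\theta(Y_i)^t]=0$ of Theorem \ref{thm: LL Einstein metrics structure}, reinterpreted as saying that $\theta$ is a minimal point of the moment map for the $GL(\mathfrak n)$-action on $Hom([\mathfrak u,\mathfrak u],Der(\mathfrak n))$; one then conjugates $\theta([\mathfrak u,\mathfrak u])$ to a self-adjoint algebra, runs the negative gradient flow of $\|m\|^2$ inside the orbit of the centralizer $H=Z_{GL(\mathfrak n)}(\theta_1(\mathfrak g_c))$, proves that orbit is closed via the Hilbert--Mumford criterion, and invokes Richardson--Slodowy uniqueness of minimal points up to $O(\mathfrak n)$ to transport the skew-symmetry back to $\theta$. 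None of this (nor any substitute for it) appears in your proposal, and your ``fallback'' pointwise computation runs into the same unresolved cross-terms.

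A second, related issue: even granting the skew-symmetry, the maximum principle is not applied on $(G/K,g)$ with $\Ric=\lambda g$. For $\xi\in\mathfrak g_c$ the field $\xi^*$ can indeed be unbounded on $G/K$ because of the $[\mathfrak g_2,\mathfrak g_c]$ terms you identify, and skew-symmetry of $\theta(\mathfrak g_c)$ does not remove them; your Omori--Yau variant does not evade this, since the weak maximum principle at infinity still requires $F$ bounded above. The paper sidesteps the problem by applying the Bochner argument to the \emph{induced} (non-Einstein) metric on $U/K$, where $\mathfrak g_c$ commutes with a complementary subgroup so that $|[X]_{uK}|=|Ad(c^{-1})X|$ is controlled by the adjoint action of an $Ad$-compact group and a genuine maximum exists (Lemma \ref{lemma: negative Ricci curv in directions tangent to compact normal}), and where the formula $Ric_{U/K}=c\,Id+C_\theta$ together with $S(\theta(Y))=0$ for $Y\in\mathfrak g_c$ yields the needed negativity of Ricci in exactly those directions. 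So the Einstein condition enters only through the structure theorem, not through subharmonicity of $|X|^2$ on $M$ itself; as written, your argument cannot close.
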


This result is established by applying the recent structural results of \cite{LauretLafuente:StructureOfHomogeneousRicciSolitonsAndTheAlekseevskiiConjecture} together with the Bochner technique and tools from Geometric Invariant Theory.  Building on that work and the above theorem, we have the following reduction in the classification problem.

\begin{thm}\label{thm: reduction to nicer G1=Gnc} Let $G/K$ be a simply-connected, homogeneous, Einstein space of negative scalar curvature.  The transitive group $G$ can be chosen to satisfy the   following 
	\begin{enumerate}
	\item $G_1 = G_{nc}$ has no compact, normal subgroups, 
	\item $K< G_1$, and 
	\item The radical decomposes as $G_2 = AN$, where the nilradical $N$ (with the induced left-invariant metric) is nilsoliton, $A$ is an abelian group,  and $ad~\mathfrak a$ acts by symmetric endomorphisms relative to the nilsoliton metric on $\mathfrak n$.
	\end{enumerate}
\end{thm}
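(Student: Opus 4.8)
The plan is to build the required transitive group in three stages: feed Theorem~\ref{thm: main result Gc < K} into a group-theoretic reduction to handle (i) and (ii), and then import the radical structure in (iii) from \cite{LauretLafuente:StructureOfHomogeneousRicciSolitonsAndTheAlekseevskiiConjecture}. Throughout I take the action to be (almost) effective and fix a reductive decomposition $\mathfrak g = \mathfrak k \oplus \mathfrak p$, identifying $\mathfrak p$ with $T_{eK}M$, on which $\operatorname{ad}\mathfrak k$ acts skew-symmetrically. For item (i), Theorem~\ref{thm: main result Gc < K} lets me assume $G_c < K$, so $\mathfrak g_c \subseteq \mathfrak k$. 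Since $G_1 = G_c\times G_{nc}$ and $G_2$ is normal, $\hat G := G_{nc}\ltimes G_2$ is a closed subgroup, and it remains transitive: from $\mathfrak g = \mathfrak g_c + \mathfrak g_{nc} + \mathfrak g_2$ together with $\mathfrak g_c\subseteq\mathfrak k$ one gets $\mathfrak g_{nc}+\mathfrak g_2+\mathfrak k = \mathfrak g$, so the $\hat G$-orbit of $eK$ is open and hence all of the connected $M$. The Levi factor of $\hat G$ is now $G_{nc}$, which (like the simply connected solvable $G_2$, and any mixing of the two) has no compact normal subgroups. The essential point is that this step uses only $\mathfrak g_c\subseteq\mathfrak k$, not normality of $G_c$.

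For item (ii), write $\hat K = \hat G\cap K$ for the new isotropy. I would first show $\hat K\cap G_2 = \{e\}$: a subgroup of $\hat K$ contained in the simply connected solvable $G_2$ acts orthogonally on $\mathfrak p$, hence has relatively compact image, but $G_2$ has no nontrivial compact subgroup, so almost-effectiveness forces triviality. Thus $\hat K$ injects into the Levi quotient $\hat G/G_2\cong G_{nc}$ as a reductive subgroup. By the conjugacy theorems for reductive/semisimple subgroups (Mostow, Malcev), after conjugating within $\hat G$ (which preserves the radical $G_2$ and merely replaces the Levi complement) we may take $\hat K < G_{nc}$, giving (ii). The delicate feature is that $\hat K$ need not be compact in the simply connected setting; but its non-compactness originates only in the (universal cover of a) maximal torus sitting inside $G_{nc}$, so it does not interfere with the intersection argument against $G_2$.

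For item (iii), with $\hat G = G_{nc}\ltimes G_2$ and $\hat K < G_{nc}$ the nilradical $N$ satisfies $N\cap \hat K=\{e\}$, so it acts freely and its orbit carries the induced left-invariant metric. I now apply the structure theorem of \cite{LauretLafuente:StructureOfHomogeneousRicciSolitonsAndTheAlekseevskiiConjecture} to the Einstein space $\hat G/\hat K$: it produces the decomposition $G_2 = AN$ with $A$ abelian, identifies the induced metric on $N$ as a nilsoliton, and exhibits $\operatorname{ad}\mathfrak a$ as a commuting family of \emph{normal} operators. The remaining work is to promote normality to symmetry: taking $\mathfrak a$ to be the orthogonal complement of $\mathfrak n$ and using the Einstein derivation condition, the skew-symmetric part of each $\operatorname{ad}\mathfrak a$ integrates to an orthogonal automorphism of the nilsoliton and generates a relatively compact flow, which can be absorbed into $\hat K < G_{nc}$; what survives is $\operatorname{ad}\mathfrak a$ acting symmetrically with respect to the nilsoliton metric.

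The main obstacle, I expect, lies in Step~3, namely the symmetric-versus-normal dichotomy for $\operatorname{ad}\mathfrak a$ and the verification that the group assembled in Steps~1--2 still satisfies the hypotheses of \cite{LauretLafuente:StructureOfHomogeneousRicciSolitonsAndTheAlekseevskiiConjecture} so that its conclusions genuinely transfer. Coordinating all three items for a \emph{single} transitive group --- stripping $G_c$, conjugating $\hat K$ into $G_{nc}$, and then reading the radical's standard form off the resulting structure --- is where the bookkeeping must be made consistent, and where both Theorem~\ref{thm: main result Gc < K} and the structure theory are brought to bear simultaneously.
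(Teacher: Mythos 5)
Your overall architecture (strip $G_c$ using Theorem \ref{thm: main result Gc < K}, place $K$ in the semisimple part, then normalize the radical) matches the paper's, and your step for item (i) is fine. However, there are two genuine gaps. In step (ii), the conjugacy argument cannot work as stated: conjugation in $\hat G = G_{nc}\ltimes G_2$ descends to conjugation on $\hat G/N \cong G_{nc}\times Z(U)^0$, which acts trivially on the abelian factor, so if $\mathfrak k$ has a nonzero component in $\mathfrak{z(u)}$ no conjugation will ever remove it. Your remark that the non-compactness of $\hat K$ ``originates only in a maximal torus sitting inside $G_{nc}$'' is exactly the point at issue rather than something Mostow--Malcev provides: a closed one-parameter subgroup of the form $\{(\exp t\Theta,\, tv)\}\subset G_{nc}\times Z(U)^0$ with $\Theta$ elliptic has compact adjoint image, is not conjugate into $G_{nc}$, and is not excluded by group theory alone. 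The paper kills this possibility geometrically, via Corollary \ref{cor: negative ricci implies no center}: a central direction of $\mathfrak g$ would have non-negative Ricci curvature, which forces $Ker~\theta\cap\mathfrak{z(u)}=0$, hence $Ker~\theta\subset[\mathfrak u,\mathfrak u]$, from which $K<[U,U]$ follows. Some such curvature input is unavoidable here.

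Step (iii) both misattributes and then mishandles the key facts. Theorem \ref{thm: LL Einstein metrics structure} gives only the compatibility condition $\sum_i[\theta(Y_i),\theta(Y_i)^t]=0$, i.e.\ that $\theta$ is a minimal point; the statement that $\theta(\mathfrak{z(u)})$ consists of normal operators is not part of that theorem but is Lemma \ref{lemma: reducing the compatibility condition}, proved via Richardson--Slodowy (stabilizers of minimal points are self-adjoint). More seriously, the orthogonal parts $\phi(u)=\theta(u)(\theta(u)^t)^{-1}$ cannot be ``absorbed into $\hat K<G_{nc}$'': they are automorphisms of $N$ that need not lie in $G$ at all, let alone in $G_{nc}$, and placing them in the isotropy of the same group would be incompatible with (ii). The paper instead enlarges the transitive group to $\overline G=\phi(Z(U))\ltimes G$ with isotropy $\overline K=\phi(Z(U))K$, takes the slanted abelian subalgebra $\mathfrak a=\{X-\tfrac{1}{2}\phi(X)\ |\ X\in\mathfrak{z(u)}\}$, on which $ad$ is symmetric by construction, and then checks that the smaller group $[U,U]A\ltimes N$ has an open orbit, which by completeness is all of $G/K$. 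Without this enlarge-then-restrict maneuver your step 3 does not actually produce a transitive group with the stated properties.
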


We note that in the special case that $G$ is solvable, (iii) above was already proven by Lauret in \cite{Lauret:SolSolitons}. 

\begin{remark}  Applying \cite{Jablo:HomogeneousRicciSolitonsAreAlgebraic} together with either of \cite{HePetersenWylie:WarpProdEinsteinMetricsOnHomogAndHomogRicciSolitons} or \cite{LauretLafuente:StructureOfHomogeneousRicciSolitonsAndTheAlekseevskiiConjecture}, the above two theorems can be seen to apply more generally to homogeneous Ricci solitons.
\end{remark}

As an application of Theorem \ref{thm: reduction to nicer G1=Gnc}, we obtain a short proof of the recent result of Arroyo-Lafuente which verifies the Generalized Alekseevskii Conjecture in low dimensions, see Section \ref{sec: GAC in low dim}.

\section{The maximum principle}

To motivate our new results, we begin by recalling the following well-known fact due to Bochner \cite{Bochner:VectorFieldsAndRicciCurvature}.

\begin{thm}[Bochner]  Let $M$ be a compact Riemannian manifold with negative Ricci curvature, then the isometry group of $M$ is discrete.
\end{thm}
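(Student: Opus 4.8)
The plan is to reduce the statement to the nonexistence of nonzero Killing fields and then apply the Bochner technique. By the theorem of Myers and Steenrod, the isometry group $\mathrm{Isom}(M)$ of the compact manifold $M$ is a (compact) Lie group, and its Lie algebra is naturally identified with the space of Killing vector fields on $M$, each such field being the infinitesimal generator of a one-parameter group of isometries. A Lie group is discrete precisely when its Lie algebra is trivial, so it suffices to prove that the only Killing field on $M$ is the zero field.

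To this end I would recall the defining property of a Killing field $X$: the endomorphism $Y \mapsto \nabla_Y X$ is skew-symmetric, equivalently the flow of $X$ preserves the metric. Combining this with the Ricci identity, one derives the Bochner--Weitzenb\"ock formula for Killing fields, namely $\nabla^*\nabla X = \Ric(X)$, where $\Ric$ is regarded as the Ricci endomorphism. Pairing this with $X$ and integrating over the closed manifold $M$, integration by parts yields
\begin{equation*}
\int_M |\nabla X|^2 \, dvol = \int_M \Ric(X,X)\, dvol.
\end{equation*}
Equivalently, one may integrate the pointwise identity $\tfrac12 \Delta |X|^2 = |\nabla X|^2 - \Ric(X,X)$ (with $\Delta = \operatorname{div}\operatorname{grad}$) and use that the integral of a Laplacian over a closed manifold vanishes.

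The conclusion is then immediate from the curvature hypothesis. Since $\Ric$ is negative definite, the right-hand side is nonpositive and vanishes only where $X$ itself vanishes, while the left-hand side is manifestly nonnegative. Hence both integrands must vanish identically, forcing $\Ric(X,X) \equiv 0$ and therefore $X \equiv 0$. Thus there are no nonzero Killing fields, the Lie algebra of $\mathrm{Isom}(M)$ is trivial, and $\mathrm{Isom}(M)$ is discrete.

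I expect the main technical obstacle to be the careful derivation of the Weitzenb\"ock identity $\nabla^*\nabla X = \Ric(X)$ for Killing fields, together with consistent sign and Laplacian conventions; once this identity is in hand, the integration and the curvature estimate are routine. One should also note that only the Lie group structure of $\mathrm{Isom}(M)$ and the identification of its Lie algebra with Killing fields are needed for the discreteness conclusion, whereas compactness of $M$ itself is what makes the integration-by-parts step legitimate.
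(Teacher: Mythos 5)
Your argument is correct and complete: Myers--Steenrod identifies the Lie algebra of $\operatorname{Isom}(M)$ with the space of Killing fields, the Weitzenb\"ock identity $\nabla^*\nabla X = \Ric(X)$ (equivalently the pointwise formula $\tfrac12\Delta|X|^2 = |\nabla X|^2 - \Ric(X,X)$) is the right tool, and the integration over the closed manifold forces $X\equiv 0$ under negative definite Ricci. The paper itself does not reprove this theorem --- it cites Bochner's original article --- but the machinery it builds around the statement takes a slightly different finishing step from yours, and the difference is the point of the paper's Section 1. Where you integrate the Bochner identity and use that $\int_M \Delta|X|^2\,dvol = 0$ on a closed manifold, the paper's Lemma \ref{lemma: maximum principle} evaluates the same identity at a point where $|X|^2$ attains its maximum and invokes the maximum principle: there $\Delta f \le 0$ while $|\nabla X|^2 - \Ric(X,X) \ge 0$, with equality forcing $X=0$ at that point and hence everywhere. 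Both finishes are equally routine in the compact case, but the maximum-principle version survives on non-compact homogeneous spaces whenever one can guarantee that $|X|^2$ attains a maximum (e.g.\ for Killing fields generated by an $Ad$-compact subgroup, as in Lemma \ref{lemma: negative Ricci curv in directions tangent to compact normal}), whereas your integration-by-parts step genuinely requires compactness of $M$. So your proof is a correct and standard proof of the stated theorem; just be aware that it is the pointwise, non-integrated form of the same identity that the paper actually exploits in the sequel.
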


One consequence of this result is that a homogeneous  space with negative Ricci curvature is necessarily non-compact.  If such a space admits a transitive unimodular group of isometries, then it is known to admit a transitive semi-simple group of isometries \cite{DottiMiatello:TransitveGroupActionsAndRicciCurvatureProperties}.  However, little else is known about these spaces in general, save some very interesting cases worked out by Nikonorov \cite{Nikonorov:OnTheRicciCurvatureOfHomogeneousMetricsOnNoncompactHomogeneousSpaces} 

We  follow Bochner's approach to glean even more about the geometry of homogeneous spaces with negative Ricci curvature.

\begin{prop}\label{prop: G semi-simple no compact factors} Let $G$ be a semi-simple group and assume that $G/K$ is a homogeneous space of negative Ricci curvature.  If $G$ acts almost effectively on $G/K$, then $G$ is of non-compact type, i.e.\  $G$ has no (non-discrete) compact, normal subgroups.
\end{prop}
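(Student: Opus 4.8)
The plan is to carry the \textbf{Bochner} technique, which yields the compact statement above, over to this non-compact homogeneous setting, replacing the integration-by-parts step (unavailable since negative Ricci curvature forces $G/K$ to be non-compact) by the observation that a well-chosen sum of squared norms of Killing fields is actually \emph{constant}. Suppose, toward a contradiction, that $G$ has a non-trivial compact factor; equivalently, $\mathfrak g$ contains a non-zero ideal $\mathfrak g_c$ on which the Killing form $B$ of $\mathfrak g$ is negative definite. Each $Y\in\mathfrak g$ generates a Killing field $Y^*$ on $G/K$ through the transitive isometric action, and I would concentrate on the fields coming from $\mathfrak g_c$.

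The central object is the function
$$ F(p) = \sum_i |X_i^*(p)|^2, $$
where $\{X_i\}$ is a $(-B)$-orthonormal basis of $\mathfrak g_c$. Since $\mathfrak g_c$ is an ideal it is $\operatorname{Ad}(G)$-invariant, and since $B$ is $\operatorname{Ad}$-invariant the map $\operatorname{Ad}(g)|_{\mathfrak g_c}$ is orthogonal for $-B$; thus $\{\operatorname{Ad}(g^{-1})X_i\}$ is again a $(-B)$-orthonormal basis of $\mathfrak g_c$. Using that $G$ acts isometrically and that $g_*Y^*=(\operatorname{Ad}(g)Y)^*$, I would rewrite $F(g\cdot p)$ as the $\mathfrak g_c$-trace of the quadratic form $Y\mapsto |Y^*(p)|^2$ computed in the rotated basis $\{\operatorname{Ad}(g^{-1})X_i\}$. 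As the trace does not depend on the choice of orthonormal basis, this gives $F(g\cdot p)=F(p)$ for all $g$, and transitivity of $G$ then forces $F$ to be constant on $G/K$.

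With $F$ constant, I would sum the Killing-field Bochner formula $\tfrac12\Delta|X_i^*|^2 = |\nabla X_i^*|^2 - \Ric(X_i^*,X_i^*)$ over $i$ to obtain $0=\tfrac12\Delta F = \sum_i|\nabla X_i^*|^2 - \sum_i \Ric(X_i^*,X_i^*)$. Because $\Ric<0$ the second sum is $\le 0$ while the first is $\ge 0$, so both vanish identically; negative-definiteness of $\Ric$ then yields $X_i^*\equiv 0$ for every $i$, i.e.\ $\mathfrak g_c$ acts trivially. But the connected subgroup $G_c$ is then positive-dimensional and contained in the kernel of the action, contradicting almost-effectiveness, so $\mathfrak g_c=0$. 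The one genuinely delicate point is the constancy of $F$: it relies on using the \emph{Killing-form} inner product on the ideal $\mathfrak g_c$, which is precisely what makes $\operatorname{Ad}(G)$ act orthogonally, together with careful bookkeeping of how the isometric action moves the Killing fields. Once $F$ is known to be constant the maximum principle collapses to the identity $\Delta F=0$ and everything else is immediate.
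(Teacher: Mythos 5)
Your argument is correct, but it takes a genuinely different route from the paper. The paper proves the proposition via its Lemma \ref{lemma: negative Ricci curv in directions tangent to compact normal}: it fixes a \emph{single} $X$ in the compact ideal, observes that $|[X]_{gK}| = |\operatorname{Ad}(g^{-1})X|$ so that $\operatorname{Ad}$-compactness of the relevant subgroup forces $f=\tfrac12|[X]|^2$ to attain a maximum, moves that maximum to $eK$, and then invokes the maximum principle from Lemma \ref{lemma: maximum principle}. You instead average: summing $|X_i^*|^2$ over a $(-B)$-orthonormal basis of the compact ideal and using that $\operatorname{Ad}(G)$ preserves both the (characteristic) ideal $\mathfrak g_c$ and the Killing form, you get a $G$-invariant, hence constant, function $F$, so $\Delta F=0$ everywhere and the summed Bochner identity $0=\sum_i|\nabla X_i^*|^2-\sum_i\Ric(X_i^*,X_i^*)$ forces every $X_i^*$ to vanish identically with no maximum principle at all. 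Both proofs are Bochner arguments and both hinge on the same compactness input (an $\operatorname{Ad}(G)$-invariant inner product on the ideal is exactly what makes the paper's single-field function attain its sup), but your version is more self-contained and pointwise, and it yields Corollary \ref{cor: negative ricci implies no center} for free (for central $Z$, $|Z^*|^2$ itself is constant, so $\Ric(Z^*,Z^*)=|\nabla Z^*|^2\ge 0$). What the paper's formulation buys in exchange is extra generality used later: its Lemma \ref{lemma: negative Ricci curv in directions tangent to compact normal} only assumes Ricci is negative \emph{along the orbit} of the group $C$ (which also absorbs the center in the non-semisimple case), which is the form needed in the proof of Theorem \ref{thm: main result Gc < K}; your trace argument would need the same mild reworking (restricting the Ricci hypothesis to orbit directions and appending $\mathfrak{z}(\mathfrak g)$) to serve there. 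One small point to make explicit: you should note that $B_{\mathfrak g}|_{\mathfrak g_c}$ coincides with the Killing form of $\mathfrak g_c$ and is therefore negative definite, and that $\mathfrak g_c$, being the sum of all compact simple ideals, is preserved by every automorphism, so the rotated family $\{\operatorname{Ad}(g^{-1})X_i\}$ really does stay inside $\mathfrak g_c$.
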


\begin{cor}\label{cor: isometry group of G/K with negative Ric} Let $G$ be a semi-simple group and $G/K$  a homogeneous space of negative Ricci curvature on which $G$ acts effectively.  Then  the connected isometry group of $G/K$ is simply $G$ and $G$ is of non-compact type.
\end{cor}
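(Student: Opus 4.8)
The plan is to derive Corollary~\ref{cor: isometry group of G/K with negative Ric} from Proposition~\ref{prop: G semi-simple no compact factors} together with the Bochner-type rigidity already recorded above. The statement has two assertions: that the connected isometry group $\operatorname{Isom}_0(G/K)$ is exactly $G$, and that $G$ is of non-compact type. First I would set $\bar G = \operatorname{Isom}_0(G/K)$ and observe that $G \subseteq \bar G$ acts transitively, so $G/K = \bar G / \bar K$ where $\bar K$ is the (compact) isotropy subgroup of $\bar G$ at a basepoint. The crux is to show $\bar G = G$, after which the non-compact type conclusion follows by applying Proposition~\ref{prop: G semi-simple no compact factors} to $\bar G$ directly—but for that I first need to know $\bar G$ is semisimple.

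The key structural input is that $G$ is assumed semisimple and acts effectively. I would argue as follows. Since $M = G/K$ has negative Ricci curvature, it is non-compact, and Bochner's theorem (stated above) guarantees that the full isometry group has no issues with being forced discrete—rather, the homogeneity is what matters. The main step is to understand the Levi decomposition of $\bar G$. Write $\bar G = \bar G_{ss} \ltimes R$ with $R$ the radical. I would show the radical $R$ must be trivial: a nontrivial solvable radical, combined with the transitive semisimple action of $G$, would be incompatible with the negative Ricci curvature through an averaging/isotropy argument. Concretely, because $G$ is already semisimple and transitive, the isotropy representation and the decomposition $\bar{\mathfrak g} = \mathfrak g + \bar{\mathfrak k}$ let me compare $G$ with $\bar G$; the normalizer structure of a semisimple transitive subgroup inside the isometry group of a negatively Ricci-curved homogeneous space should force $\bar G$ to have the same Lie algebra as $G$. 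This is where I expect to invoke that $\bar K$ is compact while, by effectiveness and the Bochner philosophy, $G/K$ carries no nontrivial Killing fields beyond those coming from $G$.

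Once $\bar G = G$ (at the level of connected components) is established, the non-compact type statement is immediate: $\bar G = G$ is semisimple and acts effectively and transitively on the negatively Ricci-curved space $G/K$, so Proposition~\ref{prop: G semi-simple no compact factors} applies verbatim and yields that $G$ has no non-discrete compact simple subgroups, i.e.\ $G$ is of non-compact type. The effectiveness hypothesis in the corollary (versus almost effectiveness in the proposition) is exactly what removes any discrepancy between $G$ and $\operatorname{Isom}_0$ at the discrete level.

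The main obstacle I anticipate is proving $\bar G = G$, i.e.\ that a transitive semisimple group of isometries of a negatively Ricci-curved homogeneous space is already the full connected isometry group. The danger is a larger isometry group with a nontrivial solvable radical, or extra semisimple factors, still preserving the metric. I would control this by showing that any such extra isometries generate Killing fields whose existence contradicts the negative Ricci curvature via the Bochner formula $\tfrac{1}{2}\Delta |X|^2 = |\nabla X|^2 - \Ric(X,X)$: on a compact manifold this forces parallel Killing fields, and the homogeneous non-compact analogue, applied to the structure of $\bar{\mathfrak g}$, should pin down the radical as trivial and the compact semisimple factors as acting trivially (hence absorbed into isotropy or excluded by effectiveness). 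Making this Bochner argument rigorous in the non-compact homogeneous setting—handling the lack of compactness by working with the algebraic identities on Killing fields rather than integrating—is the delicate point.
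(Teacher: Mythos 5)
Your reduction of the second assertion to Proposition \ref{prop: G semi-simple no compact factors} is fine, but the first assertion --- that $\operatorname{Isom}_0(G/K)=G$ --- is exactly where your argument has a hole, and you do not close it. You propose to rule out a larger connected isometry group $\bar G$ by showing that any extra Killing field contradicts negative Ricci curvature via the Bochner identity. That identity only yields a contradiction at a point where $|X|^2$ attains a maximum (this is precisely Lemma \ref{lemma: maximum principle}), and on a non-compact homogeneous space the norm of a Killing field generally attains no maximum; this is why the paper only ever applies the maximum principle to Killing fields coming from an $\mathrm{Ad}$-compact subgroup $C$ (Lemma \ref{lemma: negative Ricci curv in directions tangent to compact normal}), where $|[X]_{gK}| = |\mathrm{Ad}(c^{-1})X|$ ranges over a compact set. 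A hypothetical extra non-compact semisimple factor or solvable radical of $\bar G$ produces Killing fields to which this argument simply does not apply, so ``working with the algebraic identities on Killing fields rather than integrating'' cannot by itself pin down $\bar G$. (Note also that some hypothesis forcing the semisimple setting is essential here: for $G=AN$ the Iwasawa solvable group acting simply transitively and effectively on real hyperbolic space, the Ricci curvature is negative and yet $\operatorname{Isom}_0 \neq G$.)

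The paper closes this gap with an external structural input you never invoke: Gordon's theorem on Riemannian isometry groups containing a transitive semisimple subgroup of non-compact type \cite{Gordon:RiemannianIsometryGroupsContainingTransitiveReductiveSubgroups}. The logic there is: Proposition \ref{prop: G semi-simple no compact factors} first shows $G$ is of non-compact type; Gordon's theorem then describes $\operatorname{Isom}_0(G/K)$ in terms of $G$ together with a compact complement, and the maximum-principle machinery (which does apply to that compact part) together with effectiveness eliminates the complement. Without Gordon's theorem, or a genuine substitute for it, the step $\bar G = G$ remains unproved; your own text concedes that this is ``the delicate point,'' which is another way of saying the proof is not there.
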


Corollary \ref{cor: isometry group of G/K with negative Ric} follows immediately from Proposition \ref{prop: G semi-simple no compact factors} together with a general result of Gordon on the isometry group of homogeneous spaces where there exists a transitive semi-simple group of non-compact type, see \cite{Gordon:RiemannianIsometryGroupsContainingTransitiveReductiveSubgroups}.  We note that Corollary \ref{cor: isometry group of G/K with negative Ric} generalizes \cite[Corollary 3]{DottiMiatello:TransitveGroupActionsAndRicciCurvatureProperties}.  This gives some hope that the Alekseevskii Conjecture is indeed true as one would expect an Einstein space to have more symmetries than other metrics and there are metrics on $G/K$ whose symmetry groups are as large as  $G\times K$.  This philosophy of Einstein spaces having a large amount of symmetry is reinforced by a recent result of the first author and Gordon \cite{GordonJablonski:EinsteinSolvmanifoldsHaveMaximalSymmetry}.

In the special case of semi-simple Lie groups with left-invariant metrics, the above gives new information.  In the compact case, it is well-known that all semi-simple groups admit Einstein metrics, often more than one \cite{Jensen:TheScalarCurvatureOfLeftInvariantRiemannianMetrics}.  In the non-compact case, one can quickly see by brute force that $SL_2\mathbb R$ does not admit a left-invariant Einstein metric.  Until now, this was the only non-compact, semi-simple  group for which the existence question had been answered.

\begin{cor}  Let $G$ be a non-compact, semi-simple Lie group.  If $G$ has a (non-discrete) compact, normal subgroup, then $G$ does not admit a left-invariant Einstein metric.
\end{cor}

To prove the  proposition above, we appeal to a slightly more general setting that will be needed later.  We begin with a general lemma which is well-known and  apply it to the homogeneous setting.

\begin{lemma}\label{lemma: maximum principle} Let $X$ be a Killing field on a Riemannian manifold $M$ and consider the function $f =\frac{1}{2}  |X|^2$.  Then 
	$$\Delta f = | \nabla X |^2 - \ric(X).$$
Furthermore, let $f(p)$ be a maximum of $f$ and assume that $X_p$ is tangent to a subspace of $T_pM$ along which the Ricci tensor is negative. By the maximum principle, we  have  $X=0$.
\end{lemma}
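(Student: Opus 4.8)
The plan is to prove the two assertions in turn: first the Bochner-type identity $\Delta f = |\nabla X|^2 - ric(X)$ by a pointwise computation, and then to feed this identity into the maximum principle. For the identity I would fix a point $p$ and work in a geodesic orthonormal frame $\{e_i\}$ with $\nabla_{e_i}e_j = 0$ at $p$. Since $f = \tfrac{1}{2}\langle X,X\rangle$, the gradient obeys $\langle \nabla f, Y\rangle = \langle \nabla_Y X, X\rangle$, and differentiating once more and tracing over the frame gives, at $p$,
$$\Delta f = \sum_i \langle \nabla_{e_i}\nabla_{e_i} X, X\rangle + \sum_i \langle \nabla_{e_i} X, \nabla_{e_i} X\rangle = \langle \operatorname{tr}\nabla^2 X, X\rangle + |\nabla X|^2,$$
where I use the trace-of-Hessian convention for $\Delta$ and write $\operatorname{tr}\nabla^2 X$ for the connection (rough) Laplacian of $X$.

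The Killing hypothesis enters only through the first term. The infinitesimal Killing equation is the skew-symmetry of $\nabla X$, namely $\langle \nabla_Y X, Z\rangle + \langle \nabla_Z X, Y\rangle = 0$, and differentiating this together with the first Bianchi identity yields the standard second-derivative identity for Killing fields, whose trace is the Bochner relation $\operatorname{tr}\nabla^2 X = -\Ric(X)$ (equivalently $\nabla^*\nabla X = \Ric(X)$ for the positive rough Laplacian). Pairing with $X$ gives $\langle \operatorname{tr}\nabla^2 X, X\rangle = -\Ric(X,X) = -ric(X)$, which substituted above produces the asserted formula. I expect this curvature bookkeeping to be the only delicate point of the argument: one must track the curvature and Ricci sign conventions carefully so that the manifestly nonnegative term $|\nabla X|^2$ appears with the correct sign relative to $\Delta$.

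With the identity in hand the maximum principle is immediate. At an interior maximum $p$ of $f$ the Hessian is negative semidefinite, so $\Delta f(p)\le 0$, and the Bochner formula gives $|\nabla X|^2(p) \le ric(X_p) = \Ric(X_p,X_p)$. The left-hand side is nonnegative, forcing $\Ric(X_p,X_p)\ge 0$. But $X_p$ is assumed to lie in a subspace of $T_pM$ on which the Ricci form is negative, so $\Ric(X_p,X_p)<0$ whenever $X_p\neq 0$; the only way to avoid a contradiction is $X_p = 0$. Finally, to upgrade this to $X\equiv 0$ I would use that $p$ is a \emph{global} maximum and that $f\ge 0$ everywhere: since $f(p) = \tfrac{1}{2}|X_p|^2 = 0$ is the maximal value, we have $0\le f\le f(p) = 0$, hence $f\equiv 0$ and therefore $X\equiv 0$. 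Once the Bochner identity is established with the correct sign, every remaining step is routine.
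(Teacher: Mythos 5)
Your proof is correct and is precisely the standard Bochner argument the paper has in mind: the paper states this lemma as ``well-known'' and offers no proof of its own, so your derivation of $\Delta f = |\nabla X|^2 - \Ric(X,X)$ from the traced second-order Killing identity $\operatorname{tr}\nabla^2 X = -\Ric(X)$, followed by the evaluation at a global maximum, is exactly the intended argument. The final step (from $X_p=0$ at the maximum to $X\equiv 0$ via $0\le f\le f(p)=0$) is also handled correctly.
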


We apply this lemma in the special case that $M=G/K$.   Let $G=G_1G_2$ be a Levi-decomposition of $G$ and consider the representation $\theta: \mathfrak g \to \Der(\mathfrak g_2)$  defined by 
	$$\theta (X) = ad(X)|_{\mathfrak g_2}.$$
Let $\Ker~\theta = \{ X\in \mathfrak g \ | \ ad(X)|_{\mathfrak g_2} = 0 \}$.  As $\mathfrak g = \mathfrak g_1\ltimes \mathfrak g_2$ is a Levi decomposition of $\mathfrak g$,  we see that $\theta(\mathfrak g_1)\ltimes \theta(\mathfrak g_2)$ is a Levi decomposition of $\theta(\mathfrak g)$ and so 
$\theta(\mathfrak g_1) \cap \theta( \mathfrak g_2)$ is trivial.  This yields 
	$$\Ker~\theta = (\Ker~\theta \cap \mathfrak g_1) + ( \Ker~\theta \cap \mathfrak g_2).$$
Note that the subalgebra   $\Ker~\theta \cap \mathfrak g_1$ is an ideal of $\mathfrak g_1$.

As above, we may decompose $\mathfrak g_1 = \mathfrak g_c + \mathfrak g_{nc}$ into a sum of ideals where $\mathfrak g_c$ is the sum of the compact, simple ideals and $\mathfrak g_{nc}$ is the sum of the non-compact, simple ideals.  The ideal $\Ker~\theta \cap \mathfrak g_1$ decomposes further into $\Ker~\theta \cap \mathfrak g_1 = (\Ker~\theta \cap \mathfrak g_c) + (\Ker~\theta \cap \mathfrak g_{nc})$.  Our interest is in the connected (normal) subgroup $C$ of $G$ with Lie algebra
	$$Lie~C = (\Ker~\theta \cap \mathfrak g_c) + \mathfrak{z(g)},$$
where $\mathfrak{z(g)}$ denotes the center of $\mathfrak g$.  Note that $Lie~C \subset \Ker~\theta$.

Observe that  $G$ may be described as a product
	$$G= C D,$$
where $D$ is a subgroup of $G$ which commutes with $C$ and such that $C\cap D = Z(G)$.  To see this, one builds $D$ from $G_2$ and the normal subgroups of $G_1$ which do not appear in $C$.  Note, in the case  $G$ is semi-simple, i.e. $G_2$ is trivial, we have  $\mathfrak g = \mathfrak g_1  =\Ker~\theta$.

\begin{lemma}\label{lemma: negative Ricci curv in directions tangent to compact normal}  
There exists no $G$-invariant metric on $G/K$ whose Ricci curvature is negative in the directions  tangent to the orbit $CK = C\cdot  eK \subset G/K$. 
\end{lemma}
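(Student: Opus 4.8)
The plan is to apply the maximum principle of Lemma \ref{lemma: maximum principle} to the Killing fields generated by $Lie~C$. Suppose, toward a contradiction, that some $G$-invariant metric on $G/K$ has Ricci tensor negative on the tangent space $T_{eK}(CK)$ to the orbit $CK$ at $eK$. Every $Z\in Lie~C$ generates a one-parameter subgroup $\exp(tZ)\subset G$ acting by isometries, hence a Killing field $Z^*$ on $G/K$. I will show that for each such $Z$ the maximum principle forces $Z^*\equiv 0$, which collapses the orbit $CK$ to a point and contradicts the assumed negativity.

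First I would analyze the norm function $f_Z=\tfrac12|Z^*|^2$. Using $G$-invariance and the identification $T_{eK}(G/K)\cong\mathfrak g/\mathfrak k$, one computes $f_Z(gK)=\tfrac12\|\operatorname{pr}(Ad(g^{-1})Z)\|^2$, where $\operatorname{pr}\colon\mathfrak g\to\mathfrak g/\mathfrak k$ is the quotient projection. Thus the behavior of $f_Z$ is governed entirely by the adjoint orbit $Ad(G)Z$. The crucial point is that this orbit is compact. Since $Lie~C=(Ker~\theta\cap\mathfrak g_c)+\mathfrak{z(g)}$, the central summand is fixed by $Ad(G)$; the summand $Ker~\theta\cap\mathfrak g_c$ is centralized by $G_2$ (because $Lie~C\subset Ker~\theta$) and by $G_{nc}$ (because $[\mathfrak g_c,\mathfrak g_{nc}]=0$), so that $Ad(g)Z=Ad(g_c)Z$ depends only on the $G_c$-component $g_c$; and the restriction of $Ad(G_c)$ to $\mathfrak g_c$ is compact, as $\mathfrak g_c$ is of compact type. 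Consequently $f_Z$ is bounded and, being the image of the compact set $Ad(G)Z$ under a continuous function, attains its maximum at some point $p=g_0K$.

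Next I would identify the direction of $Z^*$ at the maximum. Since $\exp(tZ)\in C$ and $C$ is normal, the curve $\exp(tZ)g_0K=g_0\bigl(g_0^{-1}\exp(tZ)g_0\bigr)K$ remains in the orbit $C\cdot g_0K=g_0\cdot CK$, so $Z^*_p$ is tangent to this orbit. By homogeneity, left translation by $g_0$ is an isometry carrying $T_{eK}(CK)$ onto $T_p(g_0\cdot CK)$, so the Ricci tensor is negative on the tangent space of the orbit at $p$ as well. Hence $Z^*_p$ is tangent to a subspace along which the Ricci tensor is negative, and Lemma \ref{lemma: maximum principle} gives $Z^*\equiv 0$. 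As this holds for every $Z\in Lie~C$, the evaluation $Z\mapsto Z^*_{eK}=\operatorname{pr}(Z)$ vanishes on $Lie~C$, so $T_{eK}(CK)=0$ and the orbit $CK$ is a single point. But then there are no directions tangent to $CK$ on which the Ricci tensor could be negative, contradicting our assumption. Therefore no such metric exists.

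The main obstacle is the compactness of the adjoint orbit $Ad(G)Z$, which is precisely what the careful construction of $C$ is designed to ensure; without restricting to $Ker~\theta\cap\mathfrak g_c$ and the center, the Killing fields would in general have unbounded norm and the maximum principle could not be invoked. A secondary point requiring care is the transport of the negativity hypothesis from $eK$ to the maximizing point $g_0K$, which relies on the normality of $C$ together with the homogeneity of the metric.
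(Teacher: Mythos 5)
Your proof is correct and follows essentially the same route as the paper's: both apply the maximum principle of Lemma \ref{lemma: maximum principle} to the Killing field generated by an element of $Lie~C$, use the $Ad$-compactness of $C$ (which you justify in more detail via $Ker~\theta$ and $[\mathfrak g_c,\mathfrak g_{nc}]=0$) to guarantee that $\tfrac12|Z^*|^2$ attains a maximum, and then invoke negativity of the Ricci tensor along the $C$-orbit at that maximum to force $Z^*\equiv 0$. The only cosmetic difference is that the paper conjugates $X$ by $Ad(c^{-1})$ to move the maximum to $eK$, whereas you transport the negativity hypothesis to the maximizing point via normality of $C$ and homogeneity; both maneuvers are equivalent.
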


\begin{remark}
If $G/K$ were endowed with a metric whose  Ricci curvature was negative along the orbit $C\cdot  eK \subset G/K$ (as long as the orbit were non-trivial), then we'd necessarily have that  $C< K$ and so the orbit would indeed be trivial for these examples.  Notice, this proves  Theorem \ref{thm: main result Gc < K} in the special case that $G$ is semi-simple.  Further, if $G$ were acting almost effectively, then $C$  must be the trivial group.
\end{remark}

Before proving this lemma, we state a corollary which has not appeared in the literature, but is  known to some experts.

\begin{cor}\label{cor: negative ricci implies no center} Let $G/K$ be a homogeneous space where $G$ acts almost effectively.  In directions tangent to the orbit of $Z(G)$, the center of $G$, the Ricci curvature is non-negative.   Furthermore, if $G/K$ is endowed with a metric of negative Ricci curvature, then $Z(G)$ is discrete.  
\end{cor}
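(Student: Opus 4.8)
The plan is to realize each central direction as the value of a Killing field of constant length and then feed this into Lemma~\ref{lemma: maximum principle}. Fix $Y \in \mathfrak{z(g)}$ and let $X = X_Y$ be the Killing field on $G/K$ generated by the $G$-action; the tangent space to the orbit $Z(G)\cdot p$ at a point $p$ is spanned by the values $X_Y(p)$ with $Y \in \mathfrak{z(g)}$. First I would show that $f = \tfrac12 |X|^2$ is constant on $G/K$. This follows from two standard facts: for any $g\in G$ the induced diffeomorphism $\tau_{g}\colon hK \mapsto ghK$ is an isometry (as the metric is $G$-invariant) and satisfies $(\tau_{g})_* X_Y = X_{Ad(g)Y}$; and central elements satisfy $Ad(g^{-1})Y = Y$. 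Applying $\tau_{g^{-1}}$, which carries $gK$ to $eK$, gives
	$$|X_Y(gK)|^2 = |X_{Ad(g^{-1})Y}(eK)|^2 = |X_Y(eK)|^2$$
for every $g$, so $f$ is constant.

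Second, since $f$ is constant we have $\Delta f \equiv 0$, and Lemma~\ref{lemma: maximum principle} gives at every point $0 = |\nabla X|^2 - ric(X)$, i.e.\ $ric(X) = |\nabla X|^2 \ge 0$. Because any tangent vector to the orbit $Z(G)\cdot p$ is itself the value $X_Y(p)$ of a central Killing field (the center is a subalgebra, so a linear combination $\sum X_{Y_i}$ equals $X_{\sum Y_i}$ with $\sum Y_i \in \mathfrak{z(g)}$), this establishes non-negativity of the Ricci curvature in all directions tangent to the orbit of $Z(G)$. I emphasize that this first conclusion holds for \emph{every} $G$-invariant metric.

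For the second statement, suppose the metric has strictly negative Ricci curvature. If some central $X = X_Y$ were nonzero at a point $p$, then $ric(X)(p) = \Ric(X_p,X_p) < 0$, contradicting $ric(X)\ge 0$ from the previous paragraph. Hence every central Killing field vanishes identically, so each one-parameter group $\exp(tY)$, $Y \in \mathfrak{z(g)}$, lies in the kernel of the $G$-action. Almost effectiveness forces this kernel to be discrete, and a connected one-parameter subgroup contained in a discrete set is trivial; thus $\mathfrak{z(g)} = 0$ and $Z(G)$ is discrete.

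The argument is essentially a direct application of the Bochner identity of Lemma~\ref{lemma: maximum principle}, so there is no serious obstacle; the one point requiring care is the constancy of $f$, which is exactly where the $Ad$-invariance of the center is used. This is what distinguishes central Killing fields from general ones, for which $f$ need not attain a maximum on the possibly non-compact space $G/K$. One could alternatively deduce the discreteness of $Z(G)$ from Lemma~\ref{lemma: negative Ricci curv in directions tangent to compact normal}, since $\mathfrak{z(g)} \subset Lie~C$ and hence $Z(G)\cdot eK \subset C\cdot eK$, but the direct computation above has the advantage of also yielding the pointwise non-negativity statement for arbitrary invariant metrics.
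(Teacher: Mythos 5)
Your proof is correct and follows essentially the same route as the paper: the corollary is the specialization of Lemma~\ref{lemma: negative Ricci curv in directions tangent to compact normal} (whose proof uses exactly the identity $|[X]_{gK}| = |Ad(g^{-1})X|$ together with Lemma~\ref{lemma: maximum principle}) to the central directions $\mathfrak{z(g)} \subset Lie~C$, where $Ad$-invariance makes $f=\tfrac12|X|^2$ constant and upgrades the maximum-principle conclusion to the pointwise statement $ric(X)=|\nabla X|^2\ge 0$. Your handling of almost effectiveness and the linearity $X_{Y_1}+X_{Y_2}=X_{Y_1+Y_2}$ correctly fills in the details the paper leaves implicit.
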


In the special case of left-invariant metrics on Lie groups, this result is well-known \cite{Milnor:LeftInvMetricsonLieGroups}.  In the general homogeneous case, one can   deduce an alternate proof to the corollary above using the techniques in \cite{DottiMiatello:TransitveGroupActionsAndRicciCurvatureProperties}.  Additionally, Jorge Lauret has shown us a different proof which uses the relationship between $\Ric$ and the moment map; Yurii Nikonorov has pointed out to us that the corollary also follows from \cite[Theorem 4]{BerestovskiNikonorov:KillingVectorFieldsOfConstantLengthOnRiemannianManifolds}.

\begin{proof}[Proof of Lemma \ref{lemma: negative Ricci curv in directions tangent to compact normal}]  
Assume that $C\cdot  eK \subset G/K$ is non-trivial and  $G/K$ is endowed with a metric such that the Ricci curvature is negative in directions tangent to the orbit $C\cdot  eK \subset G/K$.

The proof of this lemma follows quickly from Lemma \ref{lemma: maximum principle}.  To apply that result, we take $X\in Lie~C$ and consider the Killing field $[X]$ generated by $X$; i.e. 
	$$ [X]_p = \ddtat exp (tX)\cdot p.$$
As $p = gK$ for some $g\in G$, and $g=c d$ for $c\in C$ and $d\in D$, we see that
	$$ | [X]_{gK} | = | Ad(g^{-1}) X | = | Ad(c^{-1}) X | .$$
Using that the group $C$ is $Ad$-compact, we see  that  the function $f = \frac{1}{2} | [X] |^2$ does achieve a maximum.  Further,  by replacing $X$ with $Ad(c^{-1})X \in Lie~C$, we may assume that this maximum occurs at the point $eK$.

By hypothesis, the Ricci curvature is negative along the orbit $C\cdot eK$ and   $\ric( Ad(c^{-1})X ) < 0$, unless $[X]=0$.  Applying  Lemma \ref{lemma: maximum principle}, we see that $[X]=0$.  Thus $C<K$, which is a contradiction.

In the case that $G$ acts effectively,  $[X]=0$ implies $X=0$, and thus $C$ must be  trivial as it is connected.
\end{proof}

We apply the results above in the special case of Einstein metrics to obtain our main results.  First, we introduce our next  tool, the moment map.

\section{The moment map}\label{section: the moment map}

Let $G$ be a real reductive Lie group acting linearly on a real vector space $V$.  Denoting the $G$-action on $V$ by $\rho$, we assume that $V$ is endowed with an inner product $\ip{\cdot, \cdot }$ with the property
	$$\rho(g)^t \in \rho(G) \quad \mbox{ for all } g\in G,$$
where $\cdot ^t$ denotes the transpose relative to $\ip{\cdot,\cdot }$.  Such inner products always exist for semi-simple $G$ and, more generally, whenever $\rho(G) \subset Aut(V)$ is algebraically closed and fully-reducible, see \cite{Mostow:SelfAdjointGroups}.  In this setting, we say   $G$ is \textit{self-adjoint} with respect to $\ip{\cdot,\cdot}$.

We may endow $\mathfrak g = Lie~G$ with an inner product $\ip{\ip{\cdot,\cdot}}$ such that $Ad (G)$ is self-adjoint relative to $\ip{\ip{\cdot,\cdot}}$.  Using these choices of inner products, and inspired by moment maps from symplectic and complex geometry \cite{Ness}, we define the moment map $m:V\to \mathfrak g$ of the $G$ action on $V$ by
	$$\ip{\ip{ m(v) ,X}} = \ip{\rho(X)v,v},$$
for all $X\in\mathfrak g$ and $v\in V$.  We note that we have abused notation and written the induced Lie algebra representation of $\mathfrak g = Lie~G$ by the same symbol.  Our definition of the moment map given above for a real reductive Lie group is fairly standard, although some authors would differ  in that $m$ would take values in the dual space $\mathfrak g^*$. 

\begin{remark} In the sequel, we suppress $\rho$ and denote $\rho(g)v$ by $g\cdot v$.
\end{remark}

\begin{defin} A point where the moment map vanishes is called a minimal point.\end{defin}

\begin{thm}\cite[Thm 4.3]{RichSlow}\label{thm: Rich-Slow} Let $G$ be a reductive group acting linearly on $V$, as above, and $p$ be a minimal point.  The orbit $G\cdot p$ is closed and  the stabilizer subgroup $G_p$ is self-adjoint, i.e. closed under transpose.  Furthermore, if $q\in G\cdot p$ is another minimal point, then $q\in K\cdot p$, where $K = G \cap O(V)$ and $O(V)$ is the orthogonal group relative to the inner product on $V$.
\end{thm}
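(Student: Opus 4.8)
The engine of the entire argument is the convexity of the norm function along geodesics in the symmetric space $G/K$, so I would set that up first. Write the Cartan decomposition $\mathfrak g = \mathfrak k \oplus \mathfrak p$, where $\mathfrak k = \{X : X^t = -X\}$ and $\mathfrak p = \{X : X^t = X\}$ relative to $\ip{\ip{\cdot,\cdot}}$, so that $G = K\exp(\mathfrak p)$ with $K = G\cap O(V)$. For $X\in\mathfrak p$ the operator $\rho(X)$ is symmetric, so decomposing $v=\sum_\lambda v_\lambda$ into $\rho(X)$-eigenvectors gives
\[
\psi_{X,v}(t) := |\exp(tX)\cdot v|^2 = \sum_\lambda e^{2\lambda t}\,|v_\lambda|^2,
\]
a convex function of $t$ whose derivative at $0$ is $2\ip{\rho(X)v,v}=2\ip{\ip{m(v),X}}$, and which is \emph{strictly} convex unless $\rho(X)v=0$ (in which case it is constant). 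This one computation, together with the compactness of $K$ (which makes the norm $K$-invariant), is essentially all the analytic input I expect to need.

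\textbf{Minimality, self-adjointness, and $K$-uniqueness.} Since $\rho(X)$ is skew for $X\in\mathfrak k$, vanishing of $m(p)$ is equivalent to the vanishing of every $\mathfrak p$-directional derivative of $|\cdot|^2$ along the orbit; by convexity $p$ is then a global minimizer of $g\mapsto|g\cdot p|$, since for $g=k\exp(X)$ one has $|g\cdot p|^2=\psi_{X,p}(1)\ge\psi_{X,p}(0)=|p|^2$. For the stabilizer, take $g\in G_p$ and write $g=k\exp(X)$; then $\psi_{X,p}(1)=|k^{-1}p|^2=|p|^2=\psi_{X,p}(0)$ while $\psi_{X,p}'(0)=0$, so the convex $\psi_{X,p}$ is constant on $[0,1]$ and strict convexity forces $\rho(X)p=0$. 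Hence $\exp(X)\in G_p$ and $k\in G_p$; as $\exp(X)$ is symmetric and $k^t=k^{-1}$, the transpose $g^t=\exp(X)k^{-1}$ again fixes $p$, so $G_p$ is self-adjoint. Running the same computation with a second minimal point $q=g\cdot p$ (so $|q|=|p|$) again forces $\rho(X)p=0$, whence $q=k\cdot p\in K\cdot p$, which is the last assertion of the theorem.

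\textbf{Closedness, the main obstacle.} By the Cartan decomposition and compactness of $K$ it suffices to show $\exp(\mathfrak p)\cdot p$ is closed, and the plan is to rule out escape to infinity. Given $\exp(X_n)\cdot p\to q$ with $|X_n|\to\infty$, set $Y_n=X_n/|X_n|\to Y$ on the unit sphere. Convexity of $\psi_{Y_n,p}$ with $\psi_{Y_n,p}'(0)=0$ bounds $\psi_{Y_n,p}$ on $[0,|X_n|]$ by $\max(|p|^2,|\exp(X_n)\cdot p|^2)$, which is uniformly bounded; letting $n\to\infty$ shows $s\mapsto|\exp(sY)\cdot p|^2$ is a bounded convex function minimized at $0$, hence constant, so $\rho(Y)p=0$ and $Y\in\mathfrak p\cap\mathfrak g_p$. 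The difficulty is then structural rather than analytic: I must use this stabilizing escape direction to split it off from $X_n$ and reduce to a genuinely smaller problem. Here the self-adjointness of $G_p$ proved above is exactly what guarantees that $\mathfrak g_p=\mathfrak k_p\oplus\mathfrak p_p$ is Cartan-compatible, so that $\mathfrak p_p=\mathfrak p\cap\mathfrak g_p$ splits off $\mathfrak p$-orthogonally and one can induct on $\dim\mathfrak p$ (equivalently on $\dim G\cdot p$). I expect this reduction — controlling the interaction between the escaping stabilizer direction and the transverse directions, and making the induction bookkeeping rigorous — to be the genuinely hard step, with the convexity estimates above serving as what makes the induction go through.
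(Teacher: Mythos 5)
The paper offers no proof of this statement --- it is quoted directly from Richardson--Slodowy --- so there is nothing in-text to compare against; I will judge your argument on its own. Your treatment of the last two assertions (self-adjointness of $G_p$ and the $K$-uniqueness of minimal points), as well as the preliminary fact that a minimal point minimizes the norm on its orbit, is correct and complete: the convexity of $t\mapsto|\exp(tX)\cdot v|^2$ for $X\in\mathfrak p$, the identity $\psi_{X,v}'(0)=2\ip{\ip{m(v),X}}$, and the rigidity statement ``convex, critical at $0$, equal values at $0$ and $1$ $\Rightarrow$ constant $\Rightarrow$ $\rho(X)v=0$'' are exactly the engine of the Richardson--Slodowy (and Kempf--Ness) argument, and you run it correctly.

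The genuine gap is in the closedness proof, and you flag it yourself: the proposed ``induction on $\dim\mathfrak p$'' is never carried out, and it is not clear how the inductive step would control the interaction between the escaping stabilizer direction and the transverse part of $X_n$. In fact no induction is needed; the two facts you have already established finish the job in a few lines. Since $G_p$ is reductive and self-adjoint (hence $\theta$-stable), Mostow's decomposition theorem for a $\theta$-stable reductive subgroup gives $G=K\exp(\mathfrak m)\,G_p$ with $\mathfrak m=\mathfrak p\cap(\mathfrak p\cap\mathfrak g_p)^{\perp}$, so that $G\cdot p=K\exp(\mathfrak m)\cdot p$. Now apply your escape-direction computation to a sequence $X_n\in\mathfrak m$ with $|X_n|\to\infty$ and $|\exp(X_n)\cdot p|$ bounded: the limiting unit vector $Y$ satisfies $\rho(Y)p=0$, i.e.\ $Y\in\mathfrak p\cap\mathfrak g_p$, which contradicts $Y\in\mathfrak m$ with $|Y|=1$. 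Hence $X\mapsto\exp(X)\cdot p$ is proper on $\mathfrak m$, its image is closed, and the $K$-saturation of a closed set is closed because $K$ is compact. This is essentially Richardson--Slodowy's own route; the only external input is Mostow's decomposition, the same result that justifies writing $G=K\exp(\mathfrak p)$ in your setup in the first place.
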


We note that in the work of Richardson and Slodowy \cite{RichSlow}, they do not define the moment map.  However, their definition of minimal point coincides with ours.  Our interest in the tools above comes from looking at the change of basis action on the space of representations of a Lie algebra.

\subsection{The space of representations}  Given a Lie algebra  $\mathfrak n$  and vector space $\mathfrak h$, we consider the vector space  $V=\Hom(\mathfrak h, \Der(\mathfrak n))$.  If $\mathfrak h$ were a Lie algebra acting by derivations on $\mathfrak n$, we could think of this representation of $\mathfrak h$ as an element of the vector space  $V=\Hom(\mathfrak h, \Der(\mathfrak n))$.  In the sequel, $\mathfrak h$ will not be a Lie algebra itself, but a special subspace of a Lie algebra.

The group  $GL(\mathfrak n)$ acts in the natural way on $V$.  For $\theta \in V$ and $g\in GL(\mathfrak n)$ we define $g\cdot \theta$ by
	$$(g\cdot \theta)(Y) = g \theta(Y) g^{-1},$$
for $Y\in\mathfrak h$.  
To define a moment map, we consider the most natural inner products on $V$ and $\mathfrak{gl}(\mathfrak n)$.

On $\mathfrak{gl(n)}$, we use the usual inner product $\ip{\ip{A,B}} = \tr~AB^t$.  To define an inner product on $V$, we first assume that $\mathfrak n$ and $\mathfrak h$ are endowed with inner products.  Now take $\theta, \lambda \in V$ and $\{Y_i\}$ an orthonormal basis of $\mathfrak h$ and define
	$$\ip{\theta,\lambda} = \sum_i \tr~\theta(Y_i)\lambda(Y_i)^t,$$
where the transpose is being taken with respect to the inner product on $\mathfrak n$.  To be able to define a moment map, we must first observe that indeed $\mathfrak{gl(n)}$ is self-adjoint.

\begin{remark}  If we denote this representation of $GL(\mathfrak n)$ on $V$ by $\rho$, then we have  
	$$\rho(g^t) = \rho(g)^t,$$
where the first transpose is taken in $GL(\mathfrak n)$ relative to the inner product on $\mathfrak n$ and the second is taken in $GL(V)$ relative to our natural choice of inner product on $V=  \Hom(\mathfrak h,\Der(\mathfrak n))$.  In this way, the action of $GL(\mathfrak n)$ on $V$ is self-adjoint.
\end{remark}

One quickly sees that the moment map $m:V\to \mathfrak{gl(n)}$ of the $GL(\mathfrak n)$ action on $V$  is given by
	$$m(\theta) = \sum_i [\theta(Y_i),\theta(Y_i)^t].$$
This is also computed in the appendix of \cite{Arroyo-Lafuente:HomogeneousRicciSolitonsInLowDimensions} written by Jorge Lauret.

\section{Einstein metrics on $G/K$}\label{Section: Einstein metrics}
Our main structure results on non-compact, homogeneous Einstein spaces build on those recently obtained in \cite{LauretLafuente:StructureOfHomogeneousRicciSolitonsAndTheAlekseevskiiConjecture}.  We begin by recalling the details from that work which we will use.  In this section $G$ will be a Lie group, not necessarily reductive.

Let $G/K$ be endowed with an Einstein metric of negative scalar curvature.  The Lie algebra $\mathfrak g = Lie~G$ admits a decomposition which is akin to an `algebraic Levi decomposition'.  Namely, $\mathfrak g = \mathfrak u \ltimes \mathfrak n$, where $\mathfrak u$ is reductive and $\mathfrak n$ is the nilradical.  In terms of the above notation, we have
	$$\mathfrak g_1 = [\mathfrak u,\mathfrak u]  \quad \mbox{ and } \quad  \mathfrak g_2 = \mathfrak{z(u)} \ltimes \mathfrak n,$$
where $\mathfrak{z(u)}$ denotes the center of $\mathfrak u$ and $\mathfrak n$ is the nilradical of $\mathfrak g$.  Furthermore, $\mathfrak k \subset \mathfrak u$ and there exists an $Ad~K$-stable complement $\mathfrak h$ of $\mathfrak{k}$ in $\mathfrak{u}$.  Naturally, we identify $\mathfrak h \oplus \mathfrak n$ with $T_{eK}G/K$ and we have that $\mathfrak h \perp \mathfrak n$.  These properties are very special to the Einstein setting and do not happen in general, see \cite{LauretLafuente:StructureOfHomogeneousRicciSolitonsAndTheAlekseevskiiConjecture} for more details.

\begin{remark} The inner product on $\mathfrak h$ extends naturally to an inner product on $\mathfrak u$ such that $\mathfrak h \perp \mathfrak k$ with $Ad(K)$ acting orthogonally.  As $\mathfrak u$ is reductive, we have $\mathfrak u = [\mathfrak u , \mathfrak u] + \mathfrak{z(u)}$ (vector space direct sum).  Recently, Arroyo and Lafuente proved that this direct sum is orthogonal using Lemma \ref{lemma: reducing the compatibility condition} (i) below, see \cite{Arroyo-Lafuente:TheAlekseevskiiConjectureInLowDimensions}.
\end{remark}

The following comes from \cite[Theorem 4.6]{LauretLafuente:StructureOfHomogeneousRicciSolitonsAndTheAlekseevskiiConjecture}.  In the sequel, $G$ is a simply-connected Lie group, $K$ is connected, and so $G/K$ is simply-connected.

\begin{thm}[Lafuente-Lauret]\label{thm: LL Einstein metrics structure} Let $G/K$ be a  homogeneous space endowed with an Einstein metric of negative scalar curvature.  Then $G=U\ltimes N$, where $U$ is reductive, $N$ is nilpotent, and $K<U$.  Furthermore,
	\begin{enumerate}
	\item The induced left-invariant metric on $N$ is nilsoliton.
	
	\item Denote the adjoint action of $\mathfrak u$ on $\mathfrak n$ by $\theta : \mathfrak u \to  \Der (\mathfrak n)$.  	The induced metric on $U/K$ satisfies $\Ric_{U/K} = cId +C_\theta$, where $C_\theta$ is  the symmetric operator   defined by
	$$\ip{C_\theta(Y),Y}  = \frac{1}{4} tr (S(\theta(Y) )^2  .$$
Here $S(A)$ denotes the symmetric part of the endomorphism $A: \mathfrak n \to \mathfrak n$ relative to the soliton metric on $N$.

	\item The adjoint action of $\mathfrak u$ on $\mathfrak n$  satisfies the compatibility condition 
		$$ \sum_i [\theta(Y_i) , \theta(Y_i)^t] =0$$
		where $\{Y_i\}$ is an orthonormal basis of $\mathfrak h \subset \mathfrak u$ and transpose is being taken with respect to the nilsoliton metric on $\mathfrak n$.
	\end{enumerate}

\end{thm}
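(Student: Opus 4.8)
The plan is to derive all four assertions from the single Einstein equation $\Ric_{G/K} = c\,\Id$, $c<0$, written on $\mathfrak m = \mathfrak h \oplus \mathfrak n$, by decomposing it into blocks adapted to this orthogonal splitting. First I would record the structural input that is already in place: the algebraic Levi decomposition $\mathfrak g = \mathfrak u \ltimes \mathfrak n$ with $\mathfrak n$ the nilradical and $\mathfrak u$ reductive, the inclusion $\mathfrak k \subset \mathfrak u$, the $Ad(K)$-stable complement $\mathfrak h$ of $\mathfrak k$ in $\mathfrak u$, and $\mathfrak h \perp \mathfrak n$. The group-level inclusion $K<U$ follows from $\mathfrak k \subset \mathfrak u$ once one knows $Ad(K)$ is compact and hence sits in the reductive factor $U$. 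The substance of the theorem is then the identification of the three blocks $\Ric|_{\mathfrak n}$, $\Ric|_{\mathfrak h}$, and the cross term $\Ric|_{\mathfrak h \times \mathfrak n}$.

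Next I would write the Ricci operator of the invariant metric with the bracket of $\mathfrak g$ viewed as a point in the variety of Lie brackets, separating the nilpotent bracket on $\mathfrak n$, the reductive bracket on $\mathfrak u$, and the action $\theta(Y) = ad(Y)|_{\mathfrak n}$ of $\mathfrak u$ on $\mathfrak n$. The technical engine is Lauret's formula, which expresses $\Ric$ as the moment map of the ambient $GL$-action plus explicit lower-order correction terms; carrying out this bookkeeping block by block is routine but lengthy. On the $\mathfrak n$-block it produces a schematic identity $\Ric_N = c\,\Id_{\mathfrak n} - P$, where $\Ric_N$ is the intrinsic Ricci operator of the nilpotent group $N$ and $P$ is a symmetric endomorphism assembled from the family $\{\theta(Y_i)\}$ for an orthonormal basis $\{Y_i\}$ of $\mathfrak h$.

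\emph{The main obstacle} is to show that this forces $(N,g)$ to be a nilsoliton, i.e. $\Ric_N = c'\,\Id + D$ with $D \in Der(\mathfrak n)$ symmetric; equivalently, that the correction $P$ is a scalar multiple of the identity plus a derivation of $\mathfrak n$. I would decompose the symmetric endomorphisms of $\mathfrak n$ orthogonally into the scalars, the symmetric derivations, and a remaining complement, and aim to prove that the complement component of $P$ vanishes. This is where the real work lies: the Einstein equation alone only constrains $\Ric_N$ up to the $\theta$-corrections, so one must control the non-derivation part of $P$, using $\theta(\mathfrak u) \subset Der(\mathfrak n)$, the $Ad(K)$-invariance of the configuration, and the fact that the Einstein metric is a genuine Ricci soliton, whose algebraicity \cite{Jablo:HomogeneousRicciSolitonsAreAlgebraic} makes these constraints effective.

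Finally I would read off the two formulas. The complement component of $P$ is governed by $m(\theta) = \sum_i [\theta(Y_i),\theta(Y_i)^t]$, which is symmetric and trace-free because the trace of a commutator vanishes; since a nonzero trace-free, non-derivation term cannot be cancelled by the scalar and derivation parts, the $\mathfrak n$-block forces $m(\theta)=0$, which is simultaneously the compatibility condition (iii) and the last ingredient that puts $\Ric_N$ into nilsoliton form. The $\mathfrak h$-block, after subtracting the curvature of the purely reductive space $U/K$, yields $\Ric_{U/K} = c\,\Id + C_\theta$ with $\ip{C_\theta(Y),Y} = \tfrac14 tr(S(\theta(Y))^2)$, the extra term being exactly the contribution of $\theta$ in the $\mathfrak h$-directions. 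Recognizing $m(\theta)=0$ as the vanishing of the moment map identifies $\theta$ as a minimal point, to which Theorem \ref{thm: Rich-Slow} applies.
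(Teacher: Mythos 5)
First, a point of comparison: the paper does not prove this statement at all --- it is imported verbatim as \cite[Theorem 4.6]{LauretLafuente:StructureOfHomogeneousRicciSolitonsAndTheAlekseevskiiConjecture}, so there is no internal proof to measure your attempt against. Your sketch is therefore an attempt at reproving the Lafuente--Lauret structure theorem itself. The framework you describe --- viewing the bracket data as a point in a representation space, expressing $\Ric$ via Lauret's moment-map formula, and decomposing the Einstein equation into blocks along $\mathfrak h \oplus \mathfrak n$ --- is the right one and matches the spirit of the original argument.

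There is, however, a genuine gap exactly at the step you label the main obstacle, and your proposed resolution does not close it. You argue that in $\Ric_N = c\,\Id - P$ the component of $P$ orthogonal to the scalars and the symmetric derivations must vanish ``since a nonzero trace-free, non-derivation term cannot be cancelled by the scalar and derivation parts.'' This presupposes that $\Ric_N$ itself lies in the span of $\Id$ and the symmetric derivations of $\mathfrak n$ --- but that is precisely the nilsoliton condition you are trying to establish; for a general nilpotent metric Lie group $\Ric_N$ has a nontrivial component in the complement, and the block equation by itself permits that component to cancel against the corresponding component of $P$. In addition, $\sum_i[\theta(Y_i),\theta(Y_i)^t]$, while symmetric and trace-free, is not in general orthogonal to the space of symmetric derivations, so even granting the vanishing of the ``complement component'' of $P$ you would not deduce $m(\theta)=0$. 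In the actual proof, conclusions (i) and (iii) are not obtained by linear bookkeeping but by a convexity/sign argument: one traces the Einstein equation against $\Id$, against $C_\theta$, and against the stratum label $\beta$ furnished by the GIT stratification of the variety of nilpotent brackets (the machinery of \cite{LauretStandard}), producing a sum of manifestly non-negative quantities equal to zero; the vanishing of each summand then yields the nilsoliton condition, the compatibility condition $m(\theta)=0$, and the form of $\Ric_{U/K}$ simultaneously. Without that input the argument does not go through, and this is where the real content of the theorem lies.
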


Using this theorem and the following technical lemma, to be proven later, we prove Theorem \ref{thm: main result Gc < K}.

\begin{lemma}\label{lemma: theta gc is skew-symmetric} Let $G=U\ltimes N$ act transitively on an Einstein space of negative scalar curvature.  Let $\mathfrak g_1 = [\mathfrak u,\mathfrak u]$ and write $\mathfrak g_1 = \mathfrak g_c + \mathfrak g_{nc}$ as  a sum of its compact and non-compact ideals.  Then $\theta(\mathfrak g_c)$ consists of skew-symmetric endomorphims acting on $\mathfrak n$.
\end{lemma}

\begin{proof}[Proof of Theorem \ref{thm: main result Gc < K}]  We first note that it suffices to prove the claim in the case that $G/K$ is simply-connected.  This follows from the fact that the simply-connected cover  $\tilde G$ of $G$ acts on the simply-connected cover of $G/K$, that $\tilde G_c$ covers $G_c$, and $\tilde K$ covers $K$.  So if $\tilde G_c < \tilde K$, then $G_c < K$.

From the above theorem, we have
	$$\Ric_{U/K} = cId +C_\theta$$
where$\ip{C_\theta(Y),Y}  = \frac{1}{4} tr (S(\theta(Y) )^2 $.  The lemma above shows that $C_\theta$ vanishes on $\mathfrak g_c$ and so in the direction of any non-vanishing Killing field $Y\in \mathfrak g_c$, we see that
	$$\Ric_{U/K} (Y) < 0.$$
Applying Lemma \ref{lemma: negative Ricci curv in directions tangent to compact normal} to $U/K$, we see that this is not possible and so $G_c < K$, as desired.  This proves Theorem \ref{thm: main result Gc < K}.

\end{proof}

\subsection{Refining the structure of $G$ and $\theta$}
To prove Lemma \ref{lemma: theta gc is skew-symmetric} and construct a transitive group with the properties given in Theorem \ref{thm: reduction to nicer G1=Gnc}, we first refine the structure of the $U$ action on $\mathfrak n$.

\begin{remark} Throughout, we   consider the adjoint representation of $U$ on $\mathfrak n$.  We abuse notation and denote this by $\theta : U \to \Aut(\mathfrak n)$.   This choice of notation is natural as  the corresponding Lie algebra representation of $\mathfrak u$ is precisely the one denoted by $\theta : \mathfrak u \to \Der(\mathfrak n)$ above.
\end{remark}

We may restrict $\theta$ to be a map $\theta: \mathfrak h \to \Der(\mathfrak n)$.  Now the compatibility condition above says precisely that $\theta$ is a minimal point of the $GL(\mathfrak n)$ action on $V = \Hom(\mathfrak h, \Der(\mathfrak n))$.

\begin{remark}
Observe that by extending the inner product on $\mathfrak h \oplus \mathfrak n$ to an inner product on $\mathfrak g = \mathfrak u \oplus \mathfrak n$, as described above, we have that $ad~\mathfrak k$ acts skew-symmetrically and so  $\theta: \mathfrak u \to \Der(\mathfrak n)$ is a minimal point of the $GL(\mathfrak n)$ action on $V = \Hom(\mathfrak u,\Der(\mathfrak n))$.  We adopt this view in the sequel.
\end{remark}

\begin{lemma}\label{lemma: reducing the compatibility condition}  The abelian algebra $\theta(\mathfrak{z(u)})$ consists of normal operators such that $\theta(\mathfrak{z(u)})^t$ commutes with all of $\theta(\mathfrak u)$.
\end{lemma}

\begin{proof}  This is simply an application of more general GIT results \cite[Thm 4.3]{RichSlow} (Theorem \ref{thm: Rich-Slow} above).  To apply that work, we compute the stabilizer of the $GL(\mathfrak n)$-action at $\theta$ which is 
	$$GL(\mathfrak n) _\theta = \{g\in GL(\mathfrak n) \ | \ g\cdot \theta = \theta \}  = \{ g\in GL(\mathfrak n) \ | \ 
						g\theta(Y) g^{-1} = \theta(Y) \mbox{ for all } Y\in \mathfrak u \} , $$
that is, $g\in GL(\mathfrak n)_\theta$ must commute  with all $\theta(Y)$.  Now take $u\in Z(U)$ and consider $g={\theta(u)}$.  Clearly ${\theta(u)}\in (GL(\mathfrak n))_\theta$ and so, by the theorem above, we have the same for $({\theta(u)})^t$.  Upon differentiating, we obtain the  lemma.
\end{proof}

We now extend the group $G$ to a larger, transitive group $\overline G$ of isometries whose structure is somewhat cleaner.  For  $u\in Z(U)$, we consider $\theta(u)^t$   and $\phi(u) = \theta(u) (\theta(u)^t)^{-1} \in \Aut(N)$.  
From the above, we know that $\phi(u)$ acts orthogonally and  commutes with $\theta(U)$.  Thus we may realize $\phi(u) \in \Aut(G)$ and consider the group
	$$\overline G = \phi(Z(U)) \ltimes G$$
which acts naturally on $G/K$ by isometries and with stabilizer $\overline K = \phi(Z(U))  K$.  

\begin{remark}\label{remark: g-bar the extended group}  By construction, $\overline G = \overline U \ltimes N$, where $\overline U = U \phi(Z(U))$, $G_1 = [U,U]=[\overline U,\overline U]$, and $Z(\overline U) = Z(U)\phi(Z(U))$.  From Lemma \ref{lemma: reducing the compatibility condition}, we have that $\theta(Z(\overline U))$ is closed under transpose and we will see below that $\overline U$ is the smallest group containing $U$ such that $\theta(\overline U)$ is self-adjoint.
\end{remark}

\begin{remark} Denote the Lie algebra of $\overline U$ by $\overline{\mathfrak{u}}$.  As $\phi(\mathfrak{z(u)})$ acts skew-symmetrically on $\mathfrak n$ and trivially on $\overline{\mathfrak{u}}$,  we may extend the inner product on $\mathfrak g$ to one on $\overline{\mathfrak g}$  so that  $\phi(\mathfrak{z(u)})$  acts skew-symmetrically on $\overline{\mathfrak{g}}$.  In doing so,   the extension of $\theta$ to all of $\overline{\mathfrak{u}}$ is a minimal point in $\Hom (\overline{\mathfrak u}, \Der( \mathfrak n))$.
\end{remark}

\begin{prop}\label{prop:  U bar acts self-adjointly on nilrad} Let $\theta$ be a minimal point of the $GL(\mathfrak n)$-action on $V=Hom(\overline{\mathfrak u}, Der(\mathfrak n))$, as above.   Then   
	\begin{enumerate}
	\item $\theta(\overline{\mathfrak u})$ is self-adjoint,
	\item $\theta(\mathfrak g_1)$ is self-adjoint, and
	\item $\theta(\mathfrak g_c)$ acts skew-symmetrically  on $\mathfrak n$.
	\end{enumerate}

\end{prop}

Before proving (i), we use it to quickly justify the last two claims.  To see that $\theta(\mathfrak g_1)$ is self-adjoint, observe that this subalgebra  is precisely the commutator subalgebra of $\theta(\overline{\mathfrak{u}})$ and the commutator subalgebra of a self-adjoint algebra is always self-adjoint.  This proves (ii).

To see (iii), note that  $\theta(\mathfrak g_1) \cap \mathfrak{so}(\mathfrak n)$ is a maximal compact subalgebra of $\theta(\mathfrak g_1)$ as $\theta(\mathfrak g_1)$ is self-adjoint.  Recall that the maximal compact subalgebras of $\theta (\mathfrak g_1)$ are all conjugate.  As $\mathfrak g_1 = \mathfrak g_c + \mathfrak g_{nc}$, we see that $\theta( \mathfrak g_c  )$ is contained in every maximal compact subalgebra and so we have that $\theta(\mathfrak g_c)$ is contained in $\mathfrak{so(n)}$.  Note, (iii) above is the statement in Lemma \ref{lemma: theta gc is skew-symmetric}.

\begin{remark}  It seems noteworthy to point out  one consequence of the proposition above.  If  $G/K$ admits a $G$-invariant, Einstein metric of negative scalar curvature, then from Theorem \ref{thm: LL Einstein metrics structure}  the induced geometry on $U/K$ will be such that the most negative eigenvalue of Ric occurs along the orbit of a maximal compact subgroup of $G_1$.  No known examples of this kind exist. There are examples of non-compact semi-simple Lie groups $G_1$ with left-invariant metrics with negative Ricci curvature, but in these examples  the eigenvalues in the direction of the maximal compact of $G_1$ are the closest to being zero, not the most negative.
\end{remark}

To prove part (i) of the proposition, we first establish some preliminary results.

\begin{lemma}\label{lemma: mostow on selfadjointness} There exists $g\in GL(\mathfrak n)$ such that $(g\cdot \theta)(  \overline{\mathfrak u})$ is self-adjoint.
\end{lemma}

The above is equivalent to saying that there exists some inner product relative to which  $\theta( \overline{\mathfrak u})$ is self-adjoint.  The existence of such an inner product is a classical result of Mostow \cite{Mostow:SelfAdjointGroups}.  To apply that work, observe that  $\theta( \overline{\mathfrak u}) = \theta([\mathfrak u,\mathfrak u]) + \theta (\mathfrak{z(\overline{u})})$,  the Lie algebra $\theta([\mathfrak u,\mathfrak u])$ is semi-simple and hence fully reducible and algebraic, and $\theta(\mathfrak{z(\overline{u})})$ was shown to be self-adjoint above, Lemma \ref{lemma: reducing the compatibility condition}.


\begin{lemma}  Let $\theta_0 \in Hom(\overline{\mathfrak{u}}, \Der(\mathfrak n))$ be such that $\theta_0(\overline{\mathfrak{u}})$ is self-adjoint.  Let $\theta_t$ be the trajectory of the negative gradient flow of $||m||^2$, where $m$ is the moment map for the $GL(\mathfrak n)$ action on $Hom(\overline{\mathfrak{u}}, \Der(\mathfrak n))$.  Then $\theta_t(\overline{\mathfrak u}) = \theta_0(\overline{\mathfrak u}) $ is self-adjoint.
\end{lemma}

\begin{proof}  Take $g\in \theta_0(\overline U) \subset GL(\mathfrak n)$ and consider the value of the moment map at $g\cdot \theta_0$.  Applying the formula for the moment map at the end of Section \ref{section: the moment map}, we have the following
	\begin{eqnarray*}
	m(g\cdot \theta_0) &=& \sum_i [ (g\cdot \theta_0)(Y_i), (g\cdot \theta_0)(Y_i)^t]\\
								&=& \sum_i [ (g \theta_0 (Y_i) g^{-1}, (g^{-1})^t  \theta_0 (Y_i)^t g^t ]
	\end{eqnarray*}
where  $\{ Y_i\}$ is an orthonormal basis  of $\overline{\mathfrak{u}}$.  As $\theta_0(\overline{\mathfrak u} )$ is self-adjoint and $\overline U$ is connected, we see that $\theta_0(\overline{U})$ is self-adjoint and so $m(g\cdot \theta_0) \in \theta_0(\overline{\mathfrak{u}})$.

As the gradient of $||m||^2$ at $p\in V$ is given by $m(p)\cdot p$, using standard ODE arguments, we see that the orbit $\theta_0(\overline U) \cdot \theta_0$ is a submanifold in $V = Hom(\overline{\mathfrak{u}}, \Der(\mathfrak n) )$ which is  stable under the negative gradient flow of $||m||^2$ and the result follows.

\end{proof}

We now finish the proof of Proposition \ref{prop:  U bar acts self-adjointly on nilrad}.  The representation $\theta$ is a minimal point of the $GL(\mathfrak n)$ action on $V=Hom(\overline{\mathfrak u}, \Der(\mathfrak n))$ and from Lemma \ref{lemma: mostow on selfadjointness} we know there exists $g\in GL(\mathfrak n)$ such that $\theta_0 = g\cdot \theta$ is self-adjoint.  Recall, the orbit $GL(\mathfrak n)\cdot \theta$ is closed (Theorem \ref{thm: Rich-Slow}) and   so the negative gradient flow of $||m||^2$ (starting in the orbit) will converge to a point in the orbit which is a minimal point, see either \cite[Theorem 5.2]{JabloDistinguishedOrbits} or \cite[Section 7]{Heinzner-Schwarz-Stotzel-StratificationsforRealReductiveGroups}. (Note, these works are stated over projective space, but one easily passes back from the result in projective space to the vector space $V$.)

Thus, the trajectory $\theta_t$ of the negative gradient flow of $||m||^2$ starting at $\theta_0$ limits to a minimal point $\lambda \in GL(\mathfrak n)\cdot \theta$.  Although the representation $\theta_t$ is changing, by the above lemma the image of  $\overline{\mathfrak u} $ is not and we see that
	$$\lambda(\overline{\mathfrak{u}} ) = \theta_t(\overline{\mathfrak{u}} )= \theta_0(\overline{\mathfrak{u}}  ) $$
is self-adjoint.

Now, as $\lambda$ and our original $\theta$ are both minimal points in the same $GL(\mathfrak n)$-orbit, we know  there exists $k \in O(\mathfrak n)$ such that $\theta = k\cdot \lambda$ (Theorem \ref{thm: Rich-Slow}).  Thus, $\theta(\overline{\mathfrak{u}}) = k \lambda(\overline{\mathfrak{u}}) k^{-1}$ is self-adjoint.  This completes the proof of Proposition \ref{prop:  U bar acts self-adjointly on nilrad}.

\subsection{Constructing a  special transitive group of isometries}
Here we complete the proof of Theorem \ref{thm: reduction to nicer G1=Gnc}.

\begin{lemma}\label{lemma: K splits}  Let $G/K$ be a homogeneous Einstein space of negative scalar curvature.  Write $G=(G_1Z(U))\ltimes N$ and   $\mathfrak g = (\mathfrak g_1 + \mathfrak{z(u)} ) \ltimes \mathfrak n$, as above.  Then
	\begin{enumerate}
	\item $K=K_1K_2$, where $K_1 = K\cap G_1$ and $K_2=K\cap Z(U)$.
	\end{enumerate}
Furthermore, for $X\in \mathfrak{z(u)}$, we have
	\begin{enumerate}\setcounter{enumi}{1}
	\item $\theta(X)$ is non-zero and
	\item if $\theta(X)$ is skew-symmetric, then $X\in \mathfrak k = Lie~K$.
	\end{enumerate}
\end{lemma}

\begin{proof}  We prove (ii) and (iii) first.  If $\theta(X)$ were zero, then $X$ would be central in $\mathfrak g$ and hence contained in the nilradical $\mathfrak n$.  But $\mathfrak n \cap \mathfrak u = \emptyset$ and so $\theta(Z)$ is non-zero.  The proof of (iii) is the same as in the case $X\in \mathfrak g_c$.  See the proof of Theorem \ref{thm: main result Gc < K} which is just after Lemma \ref{lemma: theta gc is skew-symmetric}.

We prove (i).  As $K$ is connected, it suffices to prove the statement at the Lie algebra level, i.e. that $\mathfrak k = \mathfrak k_1 + \mathfrak k_2$, where $\mathfrak k_1 = \mathfrak k \cap \mathfrak g_1$ and $\mathfrak k_2 = \mathfrak k \cap \mathfrak{z(u)}$.  Take $X\in \mathfrak k$ and write it as 
	$$X= X_1 + X_2, \quad \mbox{ where }X_1\in\mathfrak g_1 \mbox{ and } X_2\in \mathfrak{z(u)}.$$
Recall from the construction of $\mathfrak{\overline{u}}$ above, we have $X_2 \in \mathfrak{z(u)} \subset \mathfrak{z(\overline{u})}$.  As $\theta(\mathfrak g_1)$ and   $\theta(\mathfrak{z(\overline{u}}))$ are self-adjoint, disjoint, and $\theta(X)$ is skew-symmetric, we see that both $\theta(X_1)$ and $\theta(X_2)$ are skew-symmetric.  Thus, $X_2\in \mathfrak k$ by (iii) and so $X_1 = X-X_2 \in \mathfrak k$.
\end{proof}

One consequence of (ii) above is that $\theta$ restricted to $\mathfrak{z(u)}$ is non-singular.  Now consider the group $\overline G$ constructed above, see Remark \ref{remark: g-bar the extended group}.  The center $\mathfrak{z(\overline{u})}$ is self-adjoint under $\theta$ and can be written as 
	$$\theta(\mathfrak{z(\overline{u})} )= (\theta(\mathfrak{z(\overline{u})}  )  \cap  \mathfrak{so(n)} ) + (\theta(\mathfrak{z(\overline{u})} )  \cap symm(\mathfrak n) ),$$
where $symm(\mathfrak n)$ denotes the symmetric matrices relative to the inner product on $\mathfrak n$.  As $\theta$ restricted to $\mathfrak{z(\overline{u})} $ is non-singular, we have a decomposition
	$$
	\mathfrak{z(\overline{u})} = \mathfrak{k_z} + \mathfrak a,
	$$
where $\theta(\mathfrak{k_z})$ consists of skew-symmetric matrices and $\theta(\mathfrak a)$ consists of symmetric matrices.  We are now in a position to prove Theorem \ref{thm: reduction to nicer G1=Gnc}.  The group $H$ below fulfills all the desired conditions.

\begin{prop} Let $H$ be the connected subgroup of $\overline G$ whose Lie algebra is given by $\mathfrak h = (\mathfrak g_{nc} + \mathfrak a)\ltimes \mathfrak n$.  Then 
	\begin{enumerate}
	\item $H$ acts transitively on $G/K = \overline G/\overline K$,
	\item the Levi factor for $H$ is $H_1=G_{nc}$ and so $H_c$ is trivial,
	\item the radical of $H$ is $H_2=AN$,
	\item the adjoint action of $\mathfrak a$ on $\mathfrak n$ is by symmetric endomorphisms, and
	\item the stabilizer $H\cap K$ of the $H$-action on $G/K$ is   contained in $H_1$.
	\end{enumerate}
\end{prop}

\begin{proof}  We prove (i).  Observe that we have the following equalities
	$$\mathfrak h + \overline{\mathfrak k} 
	=  \mathfrak g_{nc} + \mathfrak a + \mathfrak n +\overline{\mathfrak k} 
	=  \mathfrak g_{nc} + \mathfrak g_c + \mathfrak{k_z}+ \mathfrak a + \mathfrak n +\overline{\mathfrak k} 
	=  \overline{\mathfrak g} + \overline{\mathfrak k}, $$
since $\mathfrak g_c$ and $\mathfrak{k_z}$ are contained in $\overline{\mathfrak k}$.  As such, the $H$-orbit of $eK$ in $G/K=\overline G/\overline K$ is open.  As this orbit is a Riemannian homogeneous space, it is an open complete submanifold of the complete, connected Riemannian manifold $G/K$ and hence equals $G/K$.  Thus, $H$ acts transitively.

Statement (ii) is immediate from the construction of $\mathfrak h$ as the Levi factor is the maximal semi-simple subalgebra/subgroup.  Again, statement (iii) is immediate from the construction of $\mathfrak h$ as the radical is the maximal solvable ideal of $\mathfrak h$.  
By the construction of $\mathfrak a$, $\theta(\mathfrak a)$ consists of symmetric endomorphisms of $\mathfrak n$; this is precisely statement (iv).  Finally, to see (v), we apply Lemma \ref{lemma: K splits} by observing that no element of $\theta(\mathfrak{a})$ is skew-symmetric and hence $A\cap K$ is trivial.

\end{proof}

\begin{remark}  It seems interesting to note that arguments above can be used to show that for any transitive group of isometries $G$ acting on a homogeneous, Einstein space $G/K$ of negative scalar curvature that $G=G_1\ltimes G_2$ and $G_2$ decomposes as $K_2S_2$ where $K_2\subset K$, $S_2\cap K$ is trivial, and $G_1S_2$ acts transitively on $G/K$.  Furthermore, the induced geometry on $S_2$ can be shown to be Einstein.
\end{remark}

\section{The Generalized Alekseevskii Conjecture in dimension 5}\label{sec: GAC in low dim}
We apply the work above to show that any $5$-dimensional, homogeneous Ricci soliton with negative cosmological constant is isometric to a simply-connected solvable Lie group with left-invariant metric.  This verifies the Generalized Alekseevskii Conjecture in dimension $5$.  This result was recently obtained in \cite{Arroyo-Lafuente:HomogeneousRicciSolitonsInLowDimensions}  and we give an alternative proof.

We begin by restricting our attention to those Ricci solitons which are not Einstein as the Einstein case was previously established in \cite{Nikonorov:NoncompactHomogEinstein5manifolds}.  Further, we are able to restrict ourselves to the case that $G/K$ is simply-connected as we will show that the spaces of interest are solvmanifolds.  See \cite{Jablo:StronglySolvable} for this reduction to the simply-connected case.  

From \cite{Jablo:HomogeneousRicciSolitonsAreAlgebraic} together with either \cite{HePetersenWylie:WarpProdEinsteinMetricsOnHomogAndHomogRicciSolitons} or \cite{LauretLafuente:StructureOfHomogeneousRicciSolitonsAndTheAlekseevskiiConjecture}, we know that there exists $G' > G$ such that $G$ is codimension 1 in $G'$ and  $G'/K$ is Einstein with non-trivial mean curvature vector $H$ which satisfies
	$$  \ip{H,X} = tr~(ad~X) \quad \mbox{ for all } X\in\mathfrak g.$$

\begin{lemma} If $G'/K$ is a solvmanifold, then   $G/K$ is also a solvmanifold.
\end{lemma}

To see this, observe that $G'/K$ being a solvmanifold and Einstein implies the isometry group of $G'/K$ is linear \cite{Jablo:StronglySolvable}, so $G_1$ is linear and $K_1 = K\cap G_1$ is a maximal compact subgroup of $G_1$.    Let $S_1$ be the Iwasawa subgroup of $G_1$.  Recall, $S_1$ is solvable and $G_1=S_1K_1$.  From here we see that $S_1 \ltimes G_2$ acts on $G/K$ with an open orbit and so acts transitively, i.e. $G/K$ is a solvmanifold.

\begin{remark}  From the lemma above, to obtain the desired result on 5-dimensional Ricci solitons, it suffices to prove the result for  6-dimensional, homogeneous Einstein spaces of negative scalar curvature with non-trivial mean curvature vector.  This is the result we will show; in the sequel $G/K$ will denote such a 6-dimensional space.
\end{remark}

From Theorem \ref{thm: reduction to nicer G1=Gnc}  we have the decomposition 
	$$\mathfrak g = (\mathfrak g_1 + \mathfrak{z(u)} ) \ltimes \mathfrak n$$
where $\mathfrak u = \mathfrak g_1 + \mathfrak{z(u)}$ is reductive,  $\mathfrak g_1$ is semi-simple with no compact ideals, and $\mathfrak k \subset \mathfrak g_1$.  Further, we have that the mean curvature vector is central in $\mathfrak u$, i.e. $H \in \mathfrak{z(u)}$ (see Eqn.~2.1 of \cite{Jablo:HomogeneousRicciSolitonsAreAlgebraic}).

\begin{remark} To show that $G/K$ is a solvmanifold, it suffices to show that $G_1/K$ is a solvmanifold.
\end{remark}

The nilradical $\mathfrak n$ cannot be trivial as otherwise we would have an Einstein metric on $U/K$ when $U$ has non-trivial center (cf.~Corollary \ref{cor: negative ricci implies no center}).  Now we see that $\dim  G_1/K  \leq 4$.

\medskip

\textbf{Case: $\dim \mathfrak n=1$.}  In this case, we have $Der(\mathfrak n) \simeq \mathbb R$  is spanned by the mean curvature vector $H$ and so  $\theta(\mathfrak g_1) = 0$.  If $\dim G_1/K =4$, then from Lafuente-Lauret's structure theorem  (Theorem \ref{thm: LL Einstein metrics structure})  we see that $G_1/K$ is Einstein and so our result is true as all 4-dimensional non-compact, homogeneous Einstein spaces are solvmanifolds.  (See \cite{Jensen:HomogEinsteinSpacesofDim4} for the classification of 4-dimensional, non-compact, homogeneous Einstein spaces.)

Now assume that $\dim G_1/K = 3$.  In this case, as $\dim \mathfrak{z(u)}=2$, we  consider the codimension 1 subgroup $\overline G$ of $G$ with Lie algebra 
	$$ \overline{\mathfrak g} = \mathfrak g_1 + \mathbb R (X), \mbox{ where } X\in \mathfrak{z(u)} \mbox{ and } X\perp H.$$
As $tr~(ad~X) = 0$ (from the definition of the mean curvature vector $H$), we see that $\theta(X)=0$.  Further, we have that 
	$$\Ric_{\overline{G}/K} < 0$$
To see this, one applies \cite[Lemma 4.2]{LauretLafuente:StructureOfHomogeneousRicciSolitonsAndTheAlekseevskiiConjecture} together with Lafuente-Lauret's structure theorem (Theorem \ref{thm: LL Einstein metrics structure}) and the observation that $G/K = \overline{G}/K \times \mathbb R$ is a Riemannian product, where $\mathbb R$ is the Lie group whose algebra contains the mean curvature vector $H$.  However, the group $\overline G$ has center and this violates Corollary \ref{cor: negative ricci implies no center}.

\textbf{Case: $\dim \mathfrak n=2$.}  Here $\dim G_1/K =3$ and the only case in which $G_1/K$ is not already a solvmanifold is when   $\mathfrak g_1 = \mathfrak{sl}_2\mathbb R$ with $\mathfrak k$ trivial.  We demonstrate that this case is not a possibility. 

By Proposition \ref{prop:  U bar acts self-adjointly on nilrad}, we know that  $\theta(\mathfrak{sl}(2,\mathbb R))$ must be self-adjoint.  As such, there exists $X\in\mathfrak{sl}(2,\mathbb R)$ such that $\theta(X)$ is skew-symmetric and so  the smallest eigenvalue of 
	$$\Ric_{\widetilde{SL(2,\mathbb R)}} (v,v)= Ric_{U/K} (v,v) = c|v|^2 + tr( S(\theta(v)) )^2  $$
occurs in the direction of $X$ which is tangent to a maximal compact subalgebra of $\mathfrak{sl}(2,\mathbb R)$.  Note that $ad~X$ has purely imaginary eigenvalues.

One can see that  no such metric exists on $\widetilde{SL(2,\mathbb R)} $ by applying \cite{Milnor:LeftInvMetricsonLieGroups}.  From that work, we know that for any left-invariant  metric on $\widetilde{SL(2,\mathbb R)}$ there exists an orthonormal basis $\{e_1,e_2,e_3\}$ such that
	$$[e_2,e_3] = \lambda_1 e_1  \quad  [e_3,e_1] = \lambda_2 e_2 \quad [e_1,e_2] = \lambda_3 e_3$$
with $\lambda_1 < 0 < \lambda_2 \leq \lambda _3$.  Furthermore, the given basis diagonalizes $\Ric_{\widetilde{SL(2,\mathbb R)}}$.

For any left-invariant metric on $\widetilde{SL(2,\mathbb R)}$,  we claim that the smallest eigenvalue of $\Ric_{\widetilde{SL(2,\mathbb R)}}$ occurs in either the $e_2$ or $e_3$ direction.  This follows quickly from \cite{Milnor:LeftInvMetricsonLieGroups} by using that either two of the eigenvalues of $\Ric_{\widetilde{SL(2,\mathbb R)}}$ are zero and one negative, or two are negative and one positive.  In the first case, one can show that the eigenvalues are $ric(e_1)=ric(e_3)=0$ and $ric(e_2)=2\lambda_3\lambda_1 <0$.  In the case that all eigenvalues of $\Ric_{\widetilde{SL(2,\mathbb R)}}$ are non-zero, these eigenvalues are given by $ric(e_i) = 2\mu_{i+1}\mu_{i+2}$ with $\mu_i = \frac{1}{2} (-\lambda_i + \lambda_{i+1} + \lambda_{i+2})$.  (Here the formulas are written using the convention that our indices are taken mod 3.)  By inspection, one is able to see that $ric(e_1)$ cannot be the smallest such eigenvalue.

Finally, $ad~e_2$ and $ad~e_3$ have real eigenvalues.  Together with the above work we see that $ad~X$ above must have only zero eigenvalues and so $X$ is central in $\mathfrak{sl}(2,\mathbb R)$.  This is a contradiction as semi-simple Lie algebras have no center.

\section{Acknowledgments}  
The first author would like to thank J.~Lauret for stimulating conversations on the topic of this paper.  The first author was supported in part by NSF grant DMS-1105647.   The second author was supported in part by NSF grant DMS-1006677.

\providecommand{\bysame}{\leavevmode\hbox to3em{\hrulefill}\thinspace}
\providecommand{\MR}{\relax\ifhmode\unskip\space\fi MR }
\providecommand{\MRhref}[2]{%
  \href{http://www.ams.org/mathscinet-getitem?mr=#1}{#2}
}
\providecommand{\href}[2]{#2}

\end{document}